\definecolor{lightblue}{rgb}{0.22,0.45,0.70}
\definecolor{cgray}{rgb}{0.9,0.9,0.9}
\numberwithin{equation}{section}
\renewcommand{\div}{\operatorname*{div}}
\newcommand{\norm}[1]{\ensuremath{\left\|#1\right\|}}
\newcommand*\nnorm[1]{\left|\!\left|\!\left|#1\right|\!\right|\!\right|}
\newcommand*\nnormh[1]{\left|\!\left|\!\left|#1\right|\!\right|\!\right|_h}
\newcommand{\jump}[1]{\ensuremath{[\![#1]\!]}}
\newcommand\Om{\Omega}
\newcommand\Qad{Q_{\mathrm{ad}}}
\newcommand\bn{\boldsymbol{n}}
\newcommand\be{\boldsymbol{e}}
\newcommand\bu{\boldsymbol{u}}
\newcommand\bx{\boldsymbol{x}}
\newcommand\bU{\mathbf{U}}
\newcommand\bA{\mathbf{A}}
\newcommand\bB{\mathbf{B}}
\newcommand\bF{\mathbf{F}}
\newcommand\bH{\mathbf{H}}
\newcommand\bD{\mathbf{D}}
\newcommand\bE{\mathbf{E}}
\newcommand\bW{\mathbf{W}}
\newcommand\bR{\mathbf{R}}
\newcommand\bG{\mathbf{G}}
\newcommand\bS{\mathbf{S}}
\newcommand\bZ{\mathbf{Z}}
\newcommand\bPhi{\boldsymbol{\Phi}}
\newcommand\balpha{\boldsymbol{\alpha}}
\newcommand\bdelta{\boldsymbol{\delta}}
\newcommand\bbeta{\boldsymbol{\beta}}
\newcommand\bv{\boldsymbol{v}}
\newcommand\cT{\mathcal{T}}
\newcommand{\dx}{\,\mathrm{d} \bx}
\newcommand{\dt}{\,\mathrm{d}t}
\newcommand{\ds}{\,\mathrm{d}s}
\newtheorem{theorem}{Theorem}[section]
\newtheorem{lemma}[theorem]{Lemma}
\newtheorem{definition}{Definition}[section]
\newtheorem{asmp}{Assumption}[section]
\newenvironment{proof}{\noindent{\it Proof.}}{\hfill$\square$}
\title{Mixed and discontinuous
finite volume element schemes for the optimal control of immiscible flow in porous media\thanks{This work
has been partially supported by the EPSRC through the Research Grant EP/R00207X/1. }}
\author{ {\sc Sarvesh Kumar}\thanks{Department of Mathematics, Indian Institute of Space
Science and Technology, Thiruvananthapuram 695 547, Kerala, India.
Email: \texttt{sarvesh@iist.ac.in}.},\quad
{\sc Ricardo Ruiz Baier}\thanks{Mathematical Institute,
University of Oxford, A. Wiles Building,
Woodstock Road, Oxford OX2 6GG, UK. E-mail: {\tt ruizbaier@maths.ox.ac.uk}.}, \quad
{\sc Ruchi Sandilya}\thanks{Centre For Applicable Mathematics,
Tata Institute of Fundamental Research, Bangalore 560065, India.
E-mail: \texttt{ruchi@tifrbng.res.in}.}}
\date{}
\begin{document}
\maketitle
\begin{abstract}
In this article we introduce a family of hybrid discretisations for
the numerical approximation of optimal control problems governed by
the equations of immiscible displacement in porous media. The proposed
schemes are based on mixed and discontinuous finite volume
element methods in combination with the optimise-then-discretise
approach for the approximation of the optimal control problem, leading
to nonsymmetric algebraic systems, and employing minimum
regularity requirements. Estimates for the error (between a
local reference solution of the infinite dimensional optimal control
problem and its hybrid approximation) measured in suitable norms are
derived, showing optimal orders of convergence.
\end{abstract}

\vspace{5pt}
\noindent
{\bf Key words}:
Optimal control problems, immiscible displacement in
porous media, mixed formulations, finite volume element methods, error
estimation.

\vspace{5pt}
\noindent
{\bf Mathematics Subject Classifications (2000)}: 49J20, 76S05, 65M60, 49M99, 65L70.

\section{Introduction}

\paragraph{Scope.}
We are interested in the accurate representation of the flow patterns produced by
immiscible fluids within porous media. With the growing importance of the underlying
physical processes in a variety of applications, the mathematical models used to
describe this scenario have received a considerable attention in the past few decades.
A popular example can be encountered in petroleum engineering, specifically in the
standard process of oil recovery. The strategy there consists in injecting water (or other
fluids having favourable density and viscosity properties) in such a way that the
 oil trapped in subsurface reservoirs is displaced mainly by pressure gradients. In its
 classical configuration, the technique of oil recovery by water injection employs two
 wells that contribute to maintain a high pressure and adequate flow rate in
 the oil field: an injection well from where the non-oleic liquid is injected, pushing the
 remaining oil towards a second, production well, from which oil is transported to the surface.

Regarding the simulation of these processes using mathematical models and numerical methods,
there is a rich body of literature dealing with mixed finite element (FE) formulations
where the filtration velocity and the pressure of each phase are solved at once (see, for instance, the
classical works \cite{darlow84,douglas183,douglas283,ewing80}). Mixed methods constructed
using $H(div)-$conforming elements for the flux variable also allow for local mass conservation.
Alternative methods, also widely used in a variety of different formulations, include discontinuous
Galerkin (DG) schemes which do not require inter-element continuity and feature element-wise
conservation, arbitrary accuracy, controlled numerical diffusion, and can handle more adequately
problems with rough coefficients (see, for instance, \cite{arnold02} for a general overview on DG
methods and \cite{sun02, sun105, sun05, whiteley15} for their application in different configurations
of multiphase flows).

A recurrent strategy in the design of numerical methods for coupled flow-transport problems as the
one described above, is to combine different techniques with the objective of retaining the main
properties of each compartmental scheme. For example, combined mixed FE and DG methods
have been applied in \cite{bartels09,li15,sun02} to numerically solve the coupled system of
miscible displacement in porous media. On the other hand, a mixed finite volume element (FVE) method
approximating the velocity-pressure pair and a discontinuous finite volume element (DFVE) scheme for
the saturation equation are combined in \cite{kumar12}.
FVE schemes require to define trial and test spaces associated to primal and dual partitions of
the domain, respectively. Different types of dual meshes are employed when the FVE method
is of conforming, non-nonconforming, or discontinuous type (see details and comparisons in
e.g. \cite{carstensen16, chou07, cui10}), but in most cases they feature local conservativity
as well as suitability for deriving $L^2-$error estimates. We point out that schemes belonging
to the particular class of
DFVE approximations preserve features of both DG
and general FVE methods, including smaller support of dual elements (when compared with
conforming and non-conforming FVEs) and appropriateness in handling discontinuous coefficients.

Also in the context of FVE methods, the development in \cite{brt12} uses a mixed
(or hybrid) conforming-nonconforming discretisation applied to sedimentation problems, \cite{bkr15,bkkr16} analyse DFVE methods
applied to viscous flow and degenerate parabolic equations, and \cite{ruiz16} introduces mixed FE in
combination with DFVE for a general class of multiphase problems. An extensive survey on different
methods  for multiphase multicomponent flows in porous media can be found in \cite{chen06,helmig06,huber99}.

 \paragraph{Optimal control and immiscible flow in porous media.}
 Oil recovery in its so-called primary and secondary stages, can only lead to the extraction of
 20\%-40\% of the reservoir's original oil. Other techniques (including a tertiary stage and the enhanced
 oil recovery process) can increase these numbers up to 30\%-60\%, but the development of control
 devices for manipulating the progression of the oil-water front, therefore increasing further the oil recovery,
 is still a topic of high interest. A viable approach consists in solving optimal control problems subject to
 the equations of two-phase incompressible immiscible flow in porous media. The goal is quite clear: to achieve
 optimal oil recovery from underground reservoirs after a fixed time interval. Several variables enter into
 consideration (as the price of oil and water, rock porosity and intrinsic permeability, the mobilities of the fluids,
 the constitutive relations defining capillary pressure, and so on) but here we will restrict the study  to
 the adjustment of the water injection only.

 Control theory and adjoint-based methods have
 been exploited in the optimisation of several aspects of the process, for instance in the design of valve operations for wells
 (see e.g. \cite{mehos89,brouwer04} and the review paper \cite{jansen11}).
 However, and in contrast with the situation observed for the approximation of direct systems, the numerical \emph{analysis}
 of optimal control problems governed by incompressible flows in porous media (meaning
 rigorous error estimates and stability properties) has been so far restricted  to classical
discretisations.  These include the FE  method for immiscible displacement optimal control studied in
\cite{chang14} and the box method for the constrained optimal control problems with partially miscible
two phase flow in porous media considered in \cite{simon13}. Our goal here is to investigate optimal control
problems governed by two-phase incompressible immiscible  flow in porous media and their discretisation
using a  combined mixed FVE discretisation for the flow equations, and a DFVE scheme for the approximation of
the transport equation. We concentrate our development on the optimise-then-discretise  approach, where one
first formulates the continuous optimality conditions and then the discretisation is applied to the continuous
optimal system (see its applicability in similar scenarios in e.g. \cite{collis02,luo13}).

\paragraph{Outline.}
The remainder of the paper is organised as follows. In Section~\ref{sec:model} we state the model problem
together with the corresponding optimality conditions, and present some preliminary results.
Section \ref{sec:fve} provides details about the discrete formulation, starting with the time discretisation
and following with the presentation of our mixed FVE/DFVE scheme applied to the optimal control
problem under consideration. In Section \ref{sec:error} we advocate to the derivation of  a priori error
estimates in suitable norms, whereas Section~\ref{sec:impl} gives an overview of the implementation
strategy employed in the solution of the overall optimal control problem.

\section{Set of governing equations}\label{sec:model}
We consider an optimal control problem governed by a nonlinear coupled system of equations representing the interaction of two incompressible  fluids in a porous structure $ \Omega \subset \mathbb{R}^2 $. We study the process occurring within the time interval $ J=(0,T] $, where the optimisation problem reads
\begin{equation}
\min\limits_{q \in \Qad } \mathcal{J}(q):=\frac{1}{2} \int_{\Omega}\tilde{w}c^2(T)\dx+\frac{\alpha_0}{2}\int_0^T \int_{\Omega}\delta_0 q(t)^2 \dx \dt, \label{obj1}
\end{equation}
subject to
\begin{align}
 \bu   & = - \kappa(\bx)\lambda(c)\nabla p, & \forall (\bx,t)\in \Omega \times J, \nonumber \\
 \nabla\cdot\bu &= (\delta_0-\delta_1)q(t), & \forall (\bx,t)\in \Omega \times J, \label{sys1}\\
 \phi \partial_t c-\nabla\cdot(\kappa(\bx)(\lambda\lambda_o\lambda_w p'_c)(c)\nabla c)+\lambda'_o(c)\bu \cdot \nabla c & =-\lambda_o(c)\delta_0 q(t), &\forall (\bx,t)\in \Omega \times J. \nonumber
\end{align}
Here $ c(\bx,t) $ represents the saturation of oil in the two-phase fluid, $ \phi(\bx) $ the porosity of the rock, $ \kappa(\bx) $ the permeability of the porous rock, $ \lambda(c) $ the total mobility of the two-phase fluid, $ \lambda_o(c) $ the relative mobility of the oil, $ \lambda_w(c) $ the relative mobility of the water, $ \bu(\bx,t) $ the Darcy velocity of the fluid mixture, $ q(t) $ the flow rate, $ p_c(c) $ the capillary pressure, $ \tilde{w} $ the price of oil and $ \alpha_0 $ the price of water. The terms $ \delta_0 $ and $ \delta_1 $ are Dirac functions located at the injection and production wells, respectively. For a given $\hat{q}>0$, by $\Qad  $ we denote the set of admissible controls
$$ \Qad =\lbrace q \in L^\infty[0,T]: 0\leq q \leq \hat{q}\rbrace. $$
The overall mechanism consists in finding a control $ q $ over a time interval $ [0,T] $ that minimises the remaining oil in the reservoir by adjusting the amount of injected water.

For sake of the analysis and discretisation of the problem, we rewrite the system equations in a slightly different notation.
Let us introduce the functions
$$ \alpha(c)=[\kappa(\bx)\lambda(c)]^{-1} ,\quad
 \mathcal{D}(c)=\kappa(\bx)\lambda(c)\lambda_o(c)\lambda_w(c) p'_c(c), \quad b(c)=\lambda'_o(c),  \quad
f(c)=-\lambda_o(c), $$ and let us assume that
$ 0<a_*\leq \alpha^{-1}(c) \leq a^* $, $ \phi_*\leq \phi(\bx) \leq \phi^* $ and $ 0<d_*\leq \mathcal{D}(c) \leq d^* $.
We also assume that $ \alpha(c), b(c), \mathcal{D}(c) $ and $ f(c) $ are Lipschitz continuous functions of $ c $.

The state system \eqref{sys1} is subject to slip velocities and zero-flux boundary conditions for the concentration:
$$
 \bu\cdot\bn =0,\quad \text{and} \quad
 \mathcal{D}(c)\nabla c \cdot \bn = 0, \quad \forall (\bx,t)\in \partial\Omega \times J, $$
 together with a compatibility zero-mean condition for the pressure
 $$ \int_{\Om} p( \bx,t)\dx=0, \quad \forall t\in J, $$
and a suitable initial datum for the saturation
$$c(\bx,0)=c_0(\bx), \quad \forall \bx \in \Omega.
$$

Let the points $ \bx_0 $ and $ \bx_1 $ denote the location of injection and production wells, respectively.
In view of constructing numerical approximations using classical methods, the Dirac delta functions
appearing as source terms in the mass conservation equation of \eqref{sys1} can be regularised
as done in e.g. \cite{chang14}. Let $ \bx_0\in \Omega_0, \bx_1\in \Omega_1 \subset \Omega $,
with $ \Omega_0 \cap \Omega_1=\emptyset $ and $ |\Omega_0|=|\Omega_1|=\sigma $ with $ 0<\sigma \ll 1 $.
We next proceed to define the functions
 \begin{align*}
   r_i=
\begin{cases}
    {1}/{\sigma}, & \, \bx\in \Omega_i\\
    0, & \, \text{otherwise},
\end{cases}\quad i=0,1, \quad \text{and} \quad
  w(\bx,t)=
\begin{cases}
    {\tilde{w}}/{\epsilon}, & \, (\bx,t)\in \Omega\times[T-\epsilon,T],\\
    0, & \, (\bx,t)\in \Omega\times[0,T-\epsilon),
\end{cases}
\end{align*}
for a given $\epsilon>0$. Then we can rewrite the optimal control problem \eqref{obj1}-\eqref{sys1} as follows
\begin{equation}\label{obj2}
\min\limits_{q \in \Qad } \mathcal{J}(q):=\frac{1}{2} \int_0^T \int_{\Omega}w(\bx,t)c^2(\bx,t)\dx\dt+\frac{\alpha_0}{2}\int_0^T q(t)^2 \dt,
\end{equation}
subject to
\begin{align}
 \alpha(c)\bu+\nabla p & = \boldsymbol{0}, &\forall (\bx,t)\in \Omega \times J, \nonumber\\
 \nabla\cdot\bu &= (r_0-r_1)q, &\forall (\bx,t)\in \Omega \times J,  \label{sys2} \\
 \phi \partial_t c-\nabla\cdot(\mathcal{D}(c)\nabla c)+b(c)\bu \cdot \nabla c &=f(c)r_0 q, &\forall (\bx,t)\in \Omega \times J.
\nonumber
\end{align}
We make the following assumptions on the system coefficients (see a similar treatment in e.g. \cite{ewing80}):
\begin{asmp} \label{asmp1}
There exists a uniform constant $ M_0>0 $ such that
\begin{align*}
\norm{\alpha^{-1}(c)}_{L^\infty(J;L^\infty(\Omega))} \leq M_0,\, \norm{b(c)}_{L^\infty(J;L^\infty(\Omega))} \leq M_0,\\
\norm{\mathcal{D}(c)}_{L^\infty(J;L^\infty(\Omega))} \leq M_0,\, \norm{f(c)}_{L^\infty(J;L^\infty(\Omega))} \leq M_0.
\end{align*}
\end{asmp}
Under Assumption \ref{asmp1}, the optimal control problem \eqref{obj2}-\eqref{sys2} admits at least one solution (for details we refer to \cite[Theorem 2.1]{chang14}). However, as the state system comprises  coupled nonlinear PDEs, the optimisation problem is non-convex and hence may exhibit multiple solutions. Therefore, we will assume a local optimal control (see a related strategy in \cite{casas02}) of problem \eqref{obj2}-\eqref{sys2} which satisfies the first order necessary and second order sufficient optimality conditions.
\begin{definition}
A control $ q \in \Qad  $ is said to be a local optimal solution of \eqref{obj2}-\eqref{sys2} in the sense of $ L^2[0,T] $, if there is an $ \epsilon >0 $ such that
$$ \mathcal{J}(q) \leq \mathcal{J}(\tilde{q}) \quad \forall \tilde{q} \in \Qad  \quad \text{with} \quad \norm{\tilde{q}-q}_{L^2[0,T]} \leq \epsilon.$$
\end{definition}
\begin{asmp} \label{asmp2}
There exists  $ M_1 >0$ such that
\begin{align*}
\norm{\bu}_{L^\infty(J;L^\infty(\Omega)^2)} \leq M_1,\, \norm{\nabla c}_{L^\infty(J;L^\infty(\Omega))} \leq M_1,\\
\norm{\mathcal{D}'(c)}_{L^\infty(J;L^\infty(\Omega))} \leq M_1,\, \norm{\alpha'(c)}_{L^\infty(J;L^\infty(\Omega))} \leq M_1.
\end{align*}
\end{asmp}
Assumptions \ref{asmp1} and \ref{asmp2} imply that the local solution $ q $ of \eqref{obj2}-\eqref{sys2} satisfies the classical first order optimality conditions,
which can be formulated as
\begin{align}
\int_0^T (f(c)r_0c^*-(r_0-r_1)p^*+\alpha_0 q, \tilde{q}-q) \dt \geq 0, \quad \forall \tilde{q} \in \Qad , \label{ncd}
\end{align}
where, $ (\bu^*,p^*,c^*) $ is the costate velocity, costate pressure and costate saturation associated with $ q $, and solving the adjoint system (see \cite[Theorem 3.1]{chang14}):
\begin{equation}\begin{split}
 \alpha(c)\bu^*+\nabla p^*+c^*b(c)\nabla c & = \boldsymbol{0},\\
 \nabla\cdot\bu^* & = 0, \\
 -\phi \partial_t c^*-\nabla\cdot(\mathcal{D}(c)\nabla c^*)-(b(c)\bu-\mathcal{D}'(c)\nabla c)\cdot \nabla c^*+\alpha'(c)\bu^*\cdot\bu +r_1qb(c)c^*&= wc,
 \end{split} \label{adjc}
\end{equation}
for a.e. $(\bx,t)\in \Omega \times J$,  associated with boundary conditions:
$$ \bu^*\cdot \bn=0, \qquad \mathcal{D}(c)\nabla c^* \cdot \bn = 0, \qquad \forall (\bx,t)\in \partial\Omega \times J,$$
and final condition $ c^*(\bx,T)=0 $. Finally, as commonly done for nonlinear systems (see e.g.  \cite{casas02,hinze01,neitzel12}), we assume that the local solution $ q $ of \eqref{obj2}-\eqref{sys2} satisfies the following second order sufficient condition:
There exists $ C_0 >0 $ such that
\begin{equation}
\mathcal{J}''(q)(\tilde{q},\tilde{q}) \geq C_0\norm{\tilde{q}}_{L^2[0,T]}^2,\quad \forall \tilde{q} \in \Qad . \label{suf}
\end{equation}
For our forthcoming analysis we recall the definition of the space $ H(\div;\Omega):=\lbrace \bv \in L^2(\Om)^2:\nabla\cdot \bv  \in L^2(\Om) \rbrace $, equipped with the norm
$$
\norm{\bv}_{\div,\Om}^2:=\norm{\bv}_{0,\Om}^2+\norm{\nabla\cdot\bv}_{0,\Om}^2,
$$
where $ \norm{\cdot}_{0,\Om}$ will be employed throughout the text to denote the norm for both the spaces
$ L^2(\Om) $ and for its vectorial counterpart $ L^2(\Om)^2$ .
Then we introduce the admissibility spaces for velocity and pressure
$$ U=\lbrace \bv \in H(\div;\Om): \bv\cdot \bn=0 \,\, \text{on}\,\, \partial \Om \rbrace,
\quad \text{and} \quad W=L^2(\Om)/\mathbb{R},$$
respectively.

\section{Finite dimensional formulation}\label{sec:fve}

\paragraph{Spatial discretisation.}
The velocity-pressure equations involved in the state and costate systems will be discretised via  mixed FVE, whereas the saturation equation will follow a DFVE formulation. In turn, the approximation of the control variable will be carried out using a variational method (see \cite{hinze05}), where the control set is discretised by a projection of the discrete costate variables. Based on a first primal partition of the domain, we will require two additional
dual meshes where the mixed and discontinuous FVE approximations will be defined.

Let us consider a regular, quasi-uniform partition $ \lbrace \mathcal{T}_h \rbrace_{h>0} $ of $ \bar{\Omega} $ into  triangles $ K $, of maximum diameter $ h $. Let $ e $ be an interior edge shared by two elements $ { K}_1 $ and $ { K}_2$  in $\cT_h $ with outward unit normal vectors $\mathbf{ n_1 } $ and $\mathbf{ n_2 } $, respectively. For a generic scalar $ q$, let $ \jump{q}:={q}|_{\partial { K}_1}\mathbf{ n_1 }+{q}|_{\partial { K}_2}\mathbf{ n_2 } $ and $ \langle q\rangle := \frac{1}{2}({q}|_{\partial { K}_1}+{q}|_{\partial { K}_2}) $ denote its jump and average value on $ e $. For a generic vector $ \mathbf{r} $, its jump and average across edge  $ e $ is denoted respectively, by $ \jump{\mathbf{r}}:=\mathbf{\mathbf{r}|_{\partial { K}_1}}\cdot \mathbf{ n_1 }+\mathbf{\mathbf{r}|_{\partial { K}_2}}\cdot\mathbf{ n_2 } $ and $ \langle \mathbf{r}\rangle := \frac{1}{2}(\mathbf{\mathbf{r}|_{\partial { K}_1}}+\mathbf{\mathbf{r}|_{\partial { K}_2}}) $. For a boundary edge $ e $ with  outward normal $ \mathbf{n} $ we adopt the convention $ \langle q \rangle = q,\,\, \jump{q}=q \mathbf{ n },\,\, \langle \mathbf{r} \rangle = \mathbf{r} $ and $ \jump{\mathbf{r}}=\mathbf{r}\cdot\mathbf{ n }$.

The finite dimensional trial spaces where approximate velocity and pressure will be sought are, respectively, the lowest order Raviart-Thomas space and the space of piecewise constants:
\begin{align*}
U_h=\lbrace \bv_h \in U: \bv_h|_{K} = (a+bx,c+by), \, \forall K\in \mathcal{T}_h  \rbrace,\\
W_h=\lbrace w_h \in W: w_h|_{ K}\,\,\text{is a constant}, \,\forall K\in \mathcal{T}_h  \rbrace.
\end{align*}
We introduce a first dual \emph{diamond} grid
(usually employed in non-conforming FVE methods, see \cite{carstensen16})
required for the approximation of the flow equations. The partition is
denoted by $ \cT_h^*$ and its diamond elements $T_M^*$ are quadrilaterals
associated with an interior edge $e_M$ of $\cT_h$ (whose mid-point is $M$). They are formed by joining the
end points of that edge to the barycentre of the triangles sharing the edge.
For a boundary edge, the diamond element coincides with the boundary sub-triangle
obtained by joining the end points of the boundary edge to its barycentre
(see Figure \ref{mfvm}).

\begin{figure}[t]
\begin{center}
\includegraphics[width = 0.775\textwidth]{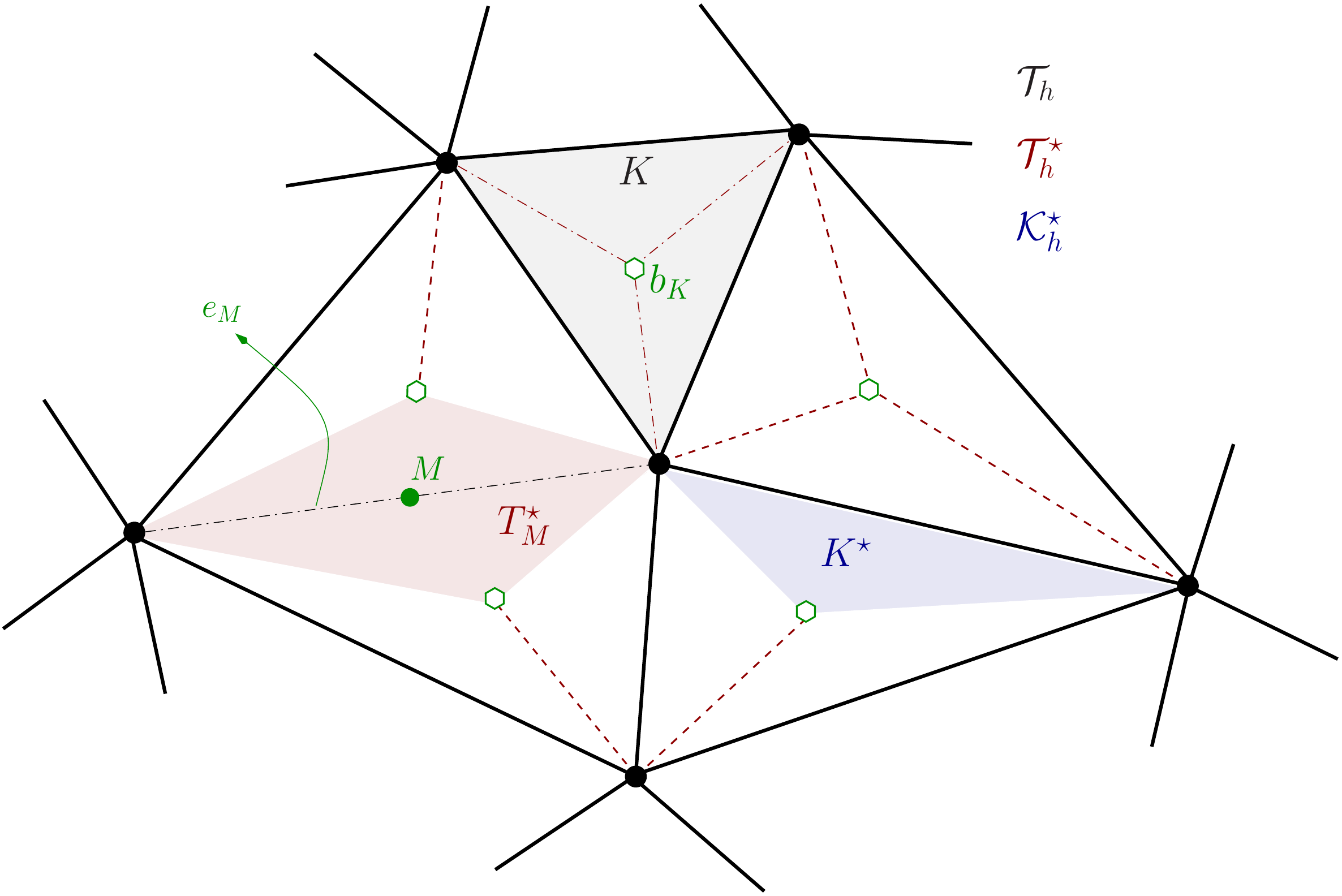}
\end{center}
\caption{Compound of five elements in the primal triangular mesh $\cT_h$ (e.g. $K$ and its barycentre $b_K$),
and examples of one diamond element $T_M^* \in \cT_h^*$ associated to the mid-point $M$ of the
edge $e_M$, and one dual element $K^*\in \mathcal{K}_h^*$.} \label{mfvm}
\end{figure}

The test space for velocity is defined by
\begin{align*}
U_h^*= \lbrace \bv_h \in L^2(\Om)^2: \bv_h|_{T_M^*} \,\, \text{is a constant vector}, \,\, \forall \,T_M^* \in \cT_h^*\,\, \text{and}\,\, \bv_h \cdot \bn=0\,\, \text{on}\,\,\partial \Om  \rbrace.
\end{align*}
The velocity trial and test spaces are connected by a transfer operator $ \gamma_h : U_h \longrightarrow U_h^* $ defined by
\begin{align}
\gamma_h \bv_h(\bx)=\sum\limits_{i=1}^{N_m} \bv_h(M_i) \chi_i^*(\bx) \quad \forall \bx \in \Omega, \label{gm1}
\end{align}
where $M_i$ is the mid-point of a given edge, $ N_m $ is the total number of such mid-side nodes,
and $ \chi_i^*$ is the characteristic function on the diamond $T_{M_i}^*$, that is,
$$\chi_i^*(\bx) = \begin{cases}
    1, & \, \text{if}\,\,\bx\in T_{M_i}^*\\
    0, & \, \text{otherwise}.
\end{cases} $$
The following result collects some properties of $ \gamma_h $, whose proof can be found in \cite{chou98}.
\begin{lemma}\label{prop-gamma}
Let $ \gamma_h $ be the transfer operator defined in \ref{gm1}. Then
\begin{align}
\norm{\gamma_h \bv_h}_{0,\Om} &\leq \norm{\bv_h}_{0,\Om} \quad \forall \bv_h \in U_h, \label{gmhl2}\\
\norm{\bv_h-\gamma_h \bv_h}_{0,\Om} &\leq Ch\norm{\bv_h}_{\div;\Om} \quad \forall \bv_h \in U_h, \label{gmhest}\\
b(\gamma_h \bv_h, w_h) &=-(\nabla\cdot \bv_h, w_h)\quad \forall \bv_h \in U_h, \,\, \forall w_h \in W_h,\label{brel}\\
(\alpha({c}_h){\bv}_h,\gamma_h \bv_h) & \geq C\norm{\bv_h}_{\div;\Om}^2 \quad \forall \bv_h \in U_h \,\, \text{with}\,\, \nabla\cdot \bv_h=0.
\end{align}
\end{lemma}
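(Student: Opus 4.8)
The plan is to exploit that all four bounds are local and to reduce each of them to a computation on a single primal triangle $K\in\cT_h$. Two elementary facts do the work. The first is geometric: the diamond grid $\cT_h^*$ splits every $K$ into three sub-triangles $K\cap T_M^*$, one per edge $e_M\subset\partial K$ (obtained by joining the two endpoints of $e_M$ to the barycentre of $K$), each of area $|K|/3$; on each such sub-triangle $\gamma_h\bv_h\equiv\bv_h(M)$, the value of $\bv_h$ at the midpoint $M$. The second is algebraic: if $\bv_h\in U_h$ with $\bv_h|_K=(a+bx,c+by)$, then $\bv_h(\bx)-\bv_h(M)=b\,(\bx-M)$ for every $\bx\in K$, while $\nabla\cdot\bv_h=2b$ is constant on $K$; moreover the normal trace $\bv_h\cdot\bn$ is constant along each edge of $\cT_h$ and so agrees there with the constant value carried by $\gamma_h\bv_h$.

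With these in hand I would proceed one estimate at a time. For \eqref{gmhl2}: on $K$ the integrand $|\bv_h|^2$ is a polynomial of degree two, so the midpoint quadrature rule on a triangle (which integrates polynomials of degree $\le 2$ exactly) gives $\int_K|\bv_h|^2\dx=\tfrac{|K|}{3}\sum_M|\bv_h(M)|^2$, the sum running over the three edge midpoints of $K$; the right-hand side equals $\int_K|\gamma_h\bv_h|^2\dx$ because each sub-triangle carries the constant $\bv_h(M)$ and has area $|K|/3$. Summing over $\cT_h$ yields \eqref{gmhl2} (in fact with equality). For \eqref{gmhest}: on $K\cap T_M^*$ one has $(\bv_h-\gamma_h\bv_h)(\bx)=\bv_h(\bx)-\bv_h(M)=b\,(\bx-M)$, hence $|\bv_h-\gamma_h\bv_h|=|b|\,|\bx-M|\le|b|\,h=\tfrac h2\,|\nabla\cdot\bv_h|$ pointwise on $K$; squaring and summing over the sub-triangles and over $\cT_h$ gives $\norm{\bv_h-\gamma_h\bv_h}_{0,\Om}\le\tfrac h2\norm{\nabla\cdot\bv_h}_{0,\Om}\le Ch\norm{\bv_h}_{\div;\Om}$. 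For \eqref{brel}: apply Gauss's theorem on each $K$; since $\bv_h\cdot\bn$ is constant along each edge it coincides there with the value of $\gamma_h\bv_h$, so the normal fluxes appearing in $b(\gamma_h\bv_h,w_h)$ reproduce $\int_{\partial K}\bv_h\cdot\bn\ds=\int_K\nabla\cdot\bv_h\dx$; as $w_h\in W_h$ is elementwise constant, summation over $\cT_h$ gives $-(\nabla\cdot\bv_h,w_h)$, the minus sign being the sign convention in the definition of $b$.

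For the last (coercivity) bound, note that $\nabla\cdot\bv_h=0$ forces $b=0$ on every $K$, so $\bv_h$ is a constant vector on each $K$; then $\gamma_h\bv_h=\bv_h$ and $\norm{\bv_h}_{\div;\Om}=\norm{\bv_h}_{0,\Om}$, and using $\alpha(c_h)\ge(a^*)^{-1}>0$ (from $\alpha^{-1}(c)\le a^*$) we obtain $(\alpha(c_h)\bv_h,\gamma_h\bv_h)=(\alpha(c_h)\bv_h,\bv_h)\ge(a^*)^{-1}\norm{\bv_h}_{\div;\Om}^2$. (Should one prefer not to rely on $\gamma_h\bv_h=\bv_h$, one may instead split $(\alpha(c_h)\bv_h,\gamma_h\bv_h)=(\alpha(c_h)\bv_h,\bv_h)+(\alpha(c_h)\bv_h,\gamma_h\bv_h-\bv_h)$ and absorb the second term through \eqref{gmhest} and $\nabla\cdot\bv_h=0$, at the cost of requiring $h$ sufficiently small.) I expect \eqref{gmhest} to be the genuinely delicate point: it hinges on the precise way $\gamma_h$ samples $\bv_h$ at the edge nodes together with the Raviart--Thomas identity $\bv_h(\bx)-\bv_h(M)=b\,(\bx-M)$ and $\nabla\cdot\bv_h=2b$, which is exactly what makes the discrepancy of order $h$ rather than order one in the $L^2$ norm; the rest is bookkeeping with the sub-triangle partition, exactness of midpoint quadrature, and Gauss's theorem. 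The full argument is carried out in \cite{chou98}.
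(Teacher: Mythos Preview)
The paper does not actually prove this lemma: it simply states that the proof can be found in \cite{chou98}. Your proposal goes further by sketching the argument, and the sketch is correct and in line with the approach of that reference; in particular your observation that the midpoint rule is exact for quadratics yields equality in \eqref{gmhl2}, your pointwise identity $\bv_h(\bx)-\bv_h(M)=b(\bx-M)$ with $\nabla\cdot\bv_h=2b$ gives \eqref{gmhest} directly, and your treatment of the coercivity bound via $b=0\Rightarrow\gamma_h\bv_h=\bv_h$ is the cleanest route. The only place where your write-up is a bit compressed is \eqref{brel}: the form $b(\gamma_h\bv_h,w_h)$ is defined as a sum over diamonds $T_M^*$, not over primal triangles, so one must first reorganise the boundary integrals by splitting each diamond along the primal edge $e_M$ and using that $\int_{\partial(T_M^*\cap K)}\bn\,\mathrm{d}s=0$ to reduce to integrals over the primal edges, after which your Gauss argument on each $K$ applies; this is straightforward bookkeeping, but worth making explicit.
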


For a fixed value of the approximate saturation, $ \hat{c}_h$ to be made precise later, let us
consider a fixed control $q$. Then, we can proceed as in \cite{kumar13} and define
an approximation of the state flow equations:
Find $ (\hat{\bu}_h,\hat{p}_h): \bar{J}\longrightarrow U_h \times W_h $ such that for $ t\in J $
\begin{align*}
(\alpha(\hat{c}_h)\hat{\bu}_h,\gamma_h \bv_h)+b(\gamma_h \bv_h,\hat{p}_h) & =0, \quad \forall \bv_h \in U_h,\\
(\nabla \cdot\hat{\bu}_h,w_h) - ((r_0-r_1)q,w_h) &= 0,\quad \forall w_h \in W_h,
\end{align*}
where
\begin{align*}
b(\gamma_h \bv_h, w_h): =-\sum\limits_{i=1}^{N_m} \bv_h(M_i)\cdot \int_{\partial T_{M_i}^*} w_h \bn_{T_{M_i}^*}\ds \quad \forall \bv_h \in U_h, \quad \forall w_h \in W_h.
\end{align*}

In addition to the diamond mesh $ \cT_h^*$ we  introduce a second auxiliary partition
$\mathcal{K}_h^* $, on which the DFVE approximation of the saturation will be carried out.
The elements in $\mathcal{K}_h^*$ are constructed by dividing each primal element
$ K \in \cT_h $ into three sub-triangles by joining the barycentre $ b_K $ with the
 vertices of $ K $. We can then define
the trial space $ M_h $ on $ \cT_h $ and the test space $ L_h $ on $ \mathcal{K}_h^* $ for the
saturation approximation as
\begin{align*}
M_h=\lbrace z_h \in L^2(\Omega): z_h|_ K \in \mathcal{P}_1( K)\quad \forall  K\in \cT_h \rbrace,\\
L_h=\lbrace z_h \in L^2(\Omega): z_h|_K^* \in \mathcal{P}_0(K^*)\quad \forall K^*\in \mathcal{K}_h^* \rbrace,
\end{align*}
where $ \mathcal{P}_k( K) $ denotes the local space of polynomials of degree up to $k$.
We also introduce a discrete space with higher regularity
$ M(h)=M_h\cap H^2(\Omega)$, and (as done for the approximation of velocity) we are able to
map trial and test spaces thanks to the transfer operator $\eta_h: M(h)\to L_h $ defined by
\begin{equation}
\eta_hz|_{K^*}=\frac{1}{h_e} \int_{e} z|_{K^*}\ds,\qquad  K^*\in \mathcal{K}_h^*, \label{gm2}
\end{equation}
with $ h_e $ denoting the length of the edge $e\in \partial K$ which is part of the
dual element $ K^* $ (see Figure \ref{mfvm}). In analogy to Lemma~\ref{prop-gamma}, we
now state some properties of this map, necessary in our subsequent analysis. For a
proof we refer to \cite{bi10,kumar13,ye04}.
\begin{lemma}
For the operator $ \eta_h$ defined in \eqref{gm2}, the following properties hold:
\begin{enumerate}
\item The norm defined by $\nnorm{z_h}_{\eta_h}^2:=(z_h,\eta_hz_h)$, for $ z_h \in M_h $, is equivalent to the $L^2-$norm.
\item The operator $ \eta_h$ is stable with respect to the $L^2-$norm. In particular
\begin{equation}
 \norm{\eta_hz_h}_{0,\Om}=\norm{z_h}_{0,\Om}, \,\, \forall z_h \in M_h. \label{gml2}
\end{equation}
\item There holds
$ \norm{z-\eta_hz}_{0, K} \leq  Ch_{ K}\norm{z}_{1, K} $ for all $ z\in M(h) $ and $  K\in \cT_h $.
\end{enumerate}
\end{lemma}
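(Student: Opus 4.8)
The plan is to reduce all three assertions to a single computation on a reference triangle, exploiting that each property is local: items (1) and (2) hold element by element over $\cT_h$, while item (3) holds on each dual sub-triangle $K^*\subset K$. Fix $K\in\cT_h$ with vertices $P_1,P_2,P_3$ and barycentre $b_K$, let $\lambda_1,\lambda_2,\lambda_3$ be its barycentric coordinates, and write $z_h=\sum_{i=1}^{3}z_i\lambda_i$ for a generic element of $M_h$. We will use the elementary identities $\int_K\lambda_i\lambda_j=\tfrac{|K|}{12}(1+\delta_{ij})$ and $|K^*_i|=\tfrac13|K|$, where $K^*_i\subset K$ is the sub-triangle with vertices $b_K$ and the two endpoints of the edge $e_i\subset\partial K$ opposite $P_i$. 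For item (2), since $z_h$ is affine the edge average in \eqref{gm2} equals the midpoint value, so $\eta_hz_h|_{K^*_i}=\tfrac12(z_j+z_k)$ with $\{i,j,k\}=\{1,2,3\}$; using $|K^*_i|=\tfrac13|K|$ one gets $\norm{\eta_hz_h}_{0,K}^2=\tfrac{|K|}{12}\big[(z_1+z_2)^2+(z_2+z_3)^2+(z_3+z_1)^2\big]$, and a one-line expansion shows this coincides with $\norm{z_h}_{0,K}^2=\tfrac{|K|}{6}\big[\sum_i z_i^2+\sum_{i<j}z_iz_j\big]$; summing over $\cT_h$ gives \eqref{gml2}.

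For item (1) the upper bound is immediate: Cauchy--Schwarz together with \eqref{gml2} gives $(z_h,\eta_hz_h)\le\norm{z_h}_{0,\Om}\,\norm{\eta_hz_h}_{0,\Om}=\norm{z_h}_{0,\Om}^2$, hence $\nnorm{z_h}_{\eta_h}\le\norm{z_h}_{0,\Om}$. For the lower bound I would compute $(z_h,\eta_hz_h)_K=\sum_i\tfrac12(z_j+z_k)\int_{K^*_i}z_h$ using $\int_{K^*_i}\lambda_i=\tfrac{|K|}{27}$ and $\int_{K^*_i}\lambda_\ell=\tfrac{4|K|}{27}$ for $\ell\neq i$; this yields a symmetric quadratic form in $(z_1,z_2,z_3)$ whose matrix is $\tfrac{|K|}{27}$ times $\big[\,4\,\mathrm{Id}+\tfrac52(\mathbf{1}\mathbf{1}^{\!\top}-\mathrm{Id})\,\big]$, with spectrum $\{\tfrac{|K|}{3},\tfrac{|K|}{18},\tfrac{|K|}{18}\}$. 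In particular this form is positive definite with smallest eigenvalue scaling like $|K|$ uniformly in $h$, so $\nnorm{\cdot}_{\eta_h}$ is genuinely a norm, and comparing it with the expression for $\norm{z_h}_{0,K}^2$ in the same coordinates (or, abstractly, using equivalence of norms on the fixed three-dimensional space $\mathcal{P}_1(\widehat{K})$ together with the affine map and quasi-uniformity of $\{\cT_h\}$) produces $c\,\norm{z_h}_{0,\Om}\le\nnorm{z_h}_{\eta_h}$ with $c$ independent of $h$.

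For item (3), note that $\eta_hz|_{K^*}$ is precisely the $L^2(e)$-orthogonal projection onto the constants of the trace $z|_e$ taken from within $K^*$, so the map $z\mapsto z-\eta_hz$ is bounded from $H^1(K^*)$ to $L^2(K^*)$ (via the trace inequality) and annihilates constants. Pulling back to the reference triangle by the affine map, the Deny--Lions / Bramble--Hilbert lemma gives an estimate of $\norm{z-\eta_hz}_{0,K^*}$ by the $H^1$-seminorm on the reference element, and the change of variables together with shape regularity yields $\norm{z-\eta_hz}_{0,K^*}\le Ch_K|z|_{1,K^*}\le Ch_K\norm{z}_{1,K^*}$; summing over the three sub-triangles $K^*\subset K$ gives the claim. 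This is entirely parallel to the treatment of $\gamma_h$ recalled in Lemma~\ref{prop-gamma}.

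I expect the only genuinely delicate point to be the lower bound in item (1): the elementwise form $(z_h,\eta_hz_h)_K$ is \emph{not} a Gram matrix, so its positive definiteness — equivalently, that $\eta_h$ has no nontrivial kernel-like ``defect'' on $M_h$ — must be read off from the explicit eigenvalues, and one must then check that the equivalence constant does not degenerate as $h\to0$, which is exactly where regularity and quasi-uniformity of $\{\cT_h\}$ enter. Everything else reduces to routine reference-element bookkeeping.
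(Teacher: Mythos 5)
Your proof is correct. Note that the paper does not prove this lemma at all: it simply defers to the references \cite{bi10,kumar13,ye04}, so your self-contained reference-element verification is a genuine addition rather than a retracing of the paper's argument, and it matches the standard line of proof in those references (local computation in barycentric coordinates for (1)--(2), averaging-plus-Bramble--Hilbert scaling for (3)). Your algebra checks out: the sub-triangle areas $|K^*_i|=|K|/3$, the midpoint identification of the edge average, the moments $\int_{K^*_i}\lambda_i=|K|/27$, $\int_{K^*_i}\lambda_\ell=4|K|/27$, and the resulting matrix $\tfrac{|K|}{27}\bigl[\tfrac32\,\mathrm{Id}+\tfrac52\,\mathbf{1}\mathbf{1}^{\top}\bigr]$ with eigenvalues $\{|K|/3,|K|/18,|K|/18\}$ are all correct, and the exact equality in (2) is a nice observation (it is precisely the statement that midpoint lumping over the three sub-triangles integrates $\mathcal{P}_1$ products exactly). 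One small refinement to your closing remark: for items (1) and (2) no quasi-uniformity or shape regularity is needed at all, since both quadratic forms you compare are expressed in barycentric coordinates and carry $|K|$ as a common factor (indeed both are diagonalised by $\mathbf{1}$ and its orthogonal complement, giving the uniform bound $(z_h,\eta_hz_h)_K\ge\tfrac23\norm{z_h}_{0,K}^2$ on every triangle); mesh regularity genuinely enters only in item (3), through the trace/scaling constants in the Bramble--Hilbert step. Also, since $\eta_hz_h$ on $K^*\subset K$ depends only on $z_h|_K$, the form $(z_h,\eta_hz_h)$ decouples over primal elements, so your elementwise bounds do sum up globally as claimed.
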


The DFVE formulation for the saturation equation in the state system for a given  control
$ q $ can be defined as: Find $ \hat{c}_h(t)\in M_h $, $ t\in \bar{J} $ such that
\begin{align*}
(\phi \partial_t \hat{c}_h, \eta_hz_h)+A_h(\hat{c}_h;\hat{c}_h,z_h)+(b(\hat{c}_h)\hat{\bu}_h\cdot \nabla \hat{c}_h,\eta_hz_h) =(f(\hat{c}_h)r_0 q, \eta_hz_h),\quad \forall z_h \in M_h,
\end{align*}
associated with initial condition $ \hat{c}_h(0)=c_{0,h} $, where $ \hat{c}_{0,h} $ is a Riesz projection of $ c_0(\bx) $, and for $ z,\phi,\psi \in M(h) $, the trilinear form $ A_h(\cdot;\cdot,\cdot) $ is defined by
\begin{align}
A_h(\psi;\phi,z)=&-\sum\limits_{ K\in\cT_h}\sum\limits_{j=1}^3\int\limits_{v^{j+1}_Kb_Kv_K^j} \mathcal{D}(\psi)\nabla \phi\cdot \bn \eta_hz \ds -\sum\limits_{e\in \mathcal{E}_h}\int \limits_e \jump{\eta_hz}\cdot \langle \mathcal{D}(\psi)\nabla \phi \rangle \ds
\nonumber\\&-\sum\limits_{e\in \mathcal{E}_h}\int \limits_e \jump{\eta_h\phi}\cdot \langle \mathcal{D}(\psi)\nabla z \rangle \ds+\sum\limits_{e\in \mathcal{E}_h}\int \limits_e \frac{\xi}{h_e}\jump{\phi}\jump{z}\ds,  \label{bil}
\end{align}
where $v_K^j$ denotes a given vertex of the primal element $K\in\cT_h$  and we adopt the convention $ v_K^4=v_K^1 $. The
parameter $\xi$ is a penalisation constant, chosen independently of $ h $. It turns out that the bilinear form defined in \eqref{bil} is bounded and coercive with respect to the mesh dependent norm $\nnormh{\cdot} $ defined by (see \cite[Lemmas 2.3,2.4]{kumar09}):
\begin{align*}
\nnormh{z_h}^2:=\sum\limits_{ K\in\cT_h} |z_h|_{1, K}^2+\sum_{e\in \mathcal{E}_h} \frac{1}{h_e}\int_e \jump{z_h}^2\ds.
\end{align*}

Applying the combined mixed FVE/DFVE schemes for the space discretisation of the optimal control problem \eqref{obj2}-\eqref{sys2} and relation \eqref{brel}, we obtain the following semidiscrete formulation: Find $ ({\bu}_h(t),{p}_h(t),c_h(t),{\bu}_h^*(t),{p}_h^*(t),c_h^*(t),q_h)\in U_h\times W_h\times M_h\times U_h \times W_h \times M_h\times \Qad  $ with $ t\in \bar{J} $ satisfying
\begin{align}
(\alpha(c_h)\bu_h,\gamma_h \bv_h)-( \nabla\cdot \bv_h,p_h) =0, \quad \forall \bv_h \in U_h, \label{sdvp}\\
(\nabla \cdot\bu_h,w_h) =((r_0-r_1)q_h,w_h),\quad \forall w_h \in W_h, \label{sdp}\\
(\phi \partial_t c_h, \eta_hz_h)+A_h(c_h;c_h,z_h)+(b(c_h)\bu_h\cdot\nabla c_h,\eta_hz_h) =(f(c_h)r_0 q_h, \eta_hz_h),\,\, \forall z_h \in M_h, \label{sdc}\\
(\alpha(c_h)\bu_h^*,\gamma_h \bv_h)-(\nabla\cdot \bv_h,p_h^*)+(c_h^*b(c_h)\nabla c_h,\gamma_h \bv_h) =0, \quad \forall \bv_h \in U_h, \label{sdavp}\\
(\nabla \cdot\bu_h^*,w_h) =0,\quad \forall w_h \in W_h, \label{sdap}\\
\left.
 \begin{aligned}
-(\phi\partial_t c_h^*, \eta_hz_h)+A_h(c_h;c_h^*,z_h)-(b(c_h)\bu_h\cdot\nabla c_h^*, \eta_hz_h)+(\mathcal{D}'(c_h)\nabla c_h\cdot \nabla c_h^*, \eta_hz_h)\\+(\alpha'(c_h)\bu_h^*\cdot \bu_h,\eta_hz_h)+(r_1q_hb(c_h)c_h^*,\eta_hz_h) =(wc_h, \eta_hz_h),\,\, \forall z_h \in M_h,
\end{aligned}
 \right\}\label{sdac}\\
\int_0^T (f(c_h)r_0c_h^*-(r_0-r_1)p_h^*+\alpha_0 q_h, \tilde{q}-q_h) \dt \geq 0, \quad \forall \tilde{q} \in \Qad , \label{sdopt}
\end{align}
subject to the initial and final conditions $c_h(0)=c_{0,h}$, $c_h^*(T)=0$.

\paragraph{Temporal discretisation.} Let $ \lbrace t^i \rbrace_{i=0}^N $ be a uniform partition of time interval $ [0,T] $ with time step $ \Delta t>0 $. We apply a backward Euler
method to advance in time the optimal control system \eqref{sdvp}-\eqref{sdopt},
leading to the following fully-discrete formulation:
Find $ ({\bu}_h^i,{p}_h^i,c_h^{i+1},{\bu}_h^{*i},{p}_h^{*i},{c}_h^{*(i+1)},q_h^i)\in U_h\times W_h\times M_h\times U_h \times W_h \times M_h\times \Qad  $ such that
\begin{align*}
(\alpha(c_h^{i})\bu_h^i,\gamma_h \bv_h)-( \nabla\cdot \bv_h,p_h^i)=0, \,\, i=0,\ldots,N;\\
(\nabla \cdot\bu_h^i,w_h) =((r_0-r_1)q_h^i,w_h),\,\, i=0,\ldots,N;\\
(\phi\frac{c_h^{i+1}-c_h^i}{\Delta t}, \eta_hz_h)+A_h(c_h^{i+1};c_h^{i+1},z_h)+(b(c_h^{i+1})\bu_h^i\cdot\nabla c_h^{i+1},\eta_hz_h) \qquad \qquad \\
=(f(c_h^{i+1})r_0 q_h^{i+1}, \eta_hz_h),\,\,i=0,\ldots,N-1;\\
(\alpha(c_h^{i})\bu_h^{*i},\gamma_h \bv_h)-(\nabla\cdot \bv_h,p_h^{*i})+(c_h^{*i}b(c_h^i)\nabla c_h^i,\gamma_h \bv_h) =0,\,\,i=N,\ldots,0; \\
(\nabla \cdot\bu_h^{*i},w_h) =0,\,\,i=N,\ldots,0;\\
-(\phi\frac{c_h^{*(i+1)}-c_h^{*i}}{\Delta t}, \eta_hz_h)+A_h(c_h^{i+1};c_h^{*(i+1)},z_h)-(b(c_h^{i+1})\bu_h^i\cdot\nabla c_h^{*(i+1)}, \eta_hz_h) \qquad \qquad \\
+(\mathcal{D}'(c_h^{i+1})\nabla c_h^{i+1}\cdot \nabla c_h^{*(i+1)}, \eta_hz_h)+(\alpha'(c_h^{i+1})\bu_h^{*i}\cdot \bu_h^i,\eta_hz_h)\qquad \qquad \\
+(r_1q_h^{i+1}b(c_h^{i+1})c_h^{*(i+1)},\eta_hz_h)-(wc_h^{i+1}, \eta_hz_h) =0,\,\, i=N-1,\ldots,0;\\
(f(c_h^i)r_0c_h^{*i}-(r_0-r_1)p_h^{*i}+\alpha_0 q_h^i, \tilde{q_h}-q_h^i) \geq 0, \quad \forall \tilde{q_h} \in \Qad ,\,\,i=0,\ldots,N;
\end{align*}
for all $ \bv_h \in U_h,\, w_h \in W_h  $ and $ z_h \in M_h $,  with initial and terminal conditions $ c_h^0=c_{0,h}, c_h^{*T}=0 $.

\section{Error estimates}\label{sec:error}

In this section, we derive suitable error bounds for the mixed FVE and DFVE approximations of \eqref{obj2}-\eqref{sys2} for a fixed local reference control satisfying the optimality conditions \eqref{ncd} and \eqref{suf}. Our analysis requires similar assumptions as those adopted
in \cite[Assumption (C)]{chang14}. More precisely, there exists $ M_2>0 $ such that:
\begin{align*}
\norm{\alpha''(c)}_{L^\infty(J;L^\infty)}+\norm{b''(c)}_{L^\infty(J;L^\infty)}+\norm{\mathcal{D}''(c)}_{L^\infty(J;L^\infty)} +
\norm{\bu}_{L^\infty(J;L^2(\Omega)^2)}+\norm{\partial_t \bu}_{L^\infty(J;L^2(\Omega)^2)} \\
+\norm{p}_{L^\infty(J;H^1(\Omega))}
\norm{c}_{L^\infty(J;H^2(\Omega))}+\norm{\partial_t c}_{L^\infty(J;H^2(\Omega))} +
\norm{\bu^*}_{L^\infty(J;L^2(\Omega)^2)}+\norm{\partial_t \bu^*}_{L^\infty(J;L^2(\Omega)^2)} \\
+\norm{p^*}_{L^\infty(J;H^1(\Omega))}
\norm{c^*}_{L^\infty(J;H^2(\Omega))}+\norm{\partial_t c^*}_{L^\infty(J;H^2(\Omega))} \leq M_2.\\
\end{align*}

At each time interval $ [t^m,t^{m+1}], \, m=1,\ldots,N-1 $ and for a given arbitrary $ q^m $, let the functions
$ (\hat{\bu}_h^m,\hat{p}_h^m,\hat{c}_h^{m+1},\hat{\bu}_h^{*m},\hat{p}_h^{*m},\hat{c}_h^{*(m+1)}) $ satisfy the following intermediate system
\begin{align}
\left.
 \begin{aligned}
(\alpha(\hat{c}_h^m)\hat{\bu}_h^m,\gamma_h \bv_h)-(\nabla \cdot \bv_h,\hat{p}_h^m) & =0, \quad \forall \bv_h \in U_h,\\
(\nabla \cdot\hat{\bu}_h^m,w_h) -((r_0-r_1)q^m,w_h) & =0,\quad \forall w_h \in W_h,
\end{aligned}
 \right\} \label{ifu1}\\
 \left.
 \begin{aligned}
(\phi\frac{\hat{c}_h^{m+1}-\hat{c}_h^{m}}{\Delta t}, \eta_hz_h)+A_h(\hat{c}_h^{m+1};\hat{c}_h^{m+1},z_h)+(b(\hat{c}_h^{m+1})\hat{\bu}_h^m\cdot \nabla \hat{c}_h^{m+1},\eta_hz_h) \\=(f(\hat{c}_h^{m+1})r_0 q^{m+1}, \eta_hz_h),\quad \forall z_h \in M_h,
\end{aligned}
 \right\} \label{ifc1}\\
\left.
 \begin{aligned}
(\alpha(\hat{c}_h^{m})\hat{\bu}_h^{*m},\gamma_h \bv_h)-(\nabla\cdot\bv_h,\hat{p}_h^{*m})+(\hat{c}_h^{*m}b(\hat{c}_h^{m})\nabla \hat{c}_h^{m},\gamma_h \bv_h) &=0, \quad \forall \bv_h \in U_h,\\
(\nabla \cdot\hat{\bu}_h^{*m},w_h) &=0,\quad \forall w_h \in W_h,
\end{aligned}
 \right\}\label{ifu2}\\
 \left.
 \begin{aligned}
-(\phi\frac{\hat{c}_h^{*(m+1)}-\hat{c}_h^{*m}}{\Delta t}, \eta_hz_h)+A_h(\hat{c}_h^{m+1};\hat{c}_h^{*(m+1)},z_h)-(b(\hat{c}_h^{m+1})\hat{\bu}_h^m\cdot \nabla \hat{c}_h^{*(m+1)},\eta_hz_h)\\+(\mathcal{D}'(\hat{c}_h^{m+1})\nabla \hat{c}_h^{m+1}\cdot \nabla \hat{c}_h^{*(m+1)}, \eta_hz_h)+(\alpha'(\hat{c}_h^{m+1})\hat{\bu}_h^{*m}\cdot \hat{\bu}_h^{m},\eta_hz_h)\\+(r_1b(\hat{c}_h^{m+1})q^{m+1}\hat{c}_h^{*(m+1)},\eta_hz_h) =(w\hat{c}_h^{m+1}, \eta_hz_h),\quad \forall z_h \in M_h,
\end{aligned}
 \right\} \label{ifc2}
\end{align}
associated with initial and terminal conditions $ \hat{c}_h(0)=c_{0,h}, \quad \hat{c}_h^*(T)=0 $.

The following theorem (whose proof can be found in \cite{kumar12,kumar08}) gives
an error estimate for the intermediate state variables.
\begin{theorem}\label{thm1}
At $ t=t^m,\, 1\leq m \leq N $ and for a given $ q^m $, let $ (\bu^m,p^m,c^m) $ be the exact solutions and $ (\hat{\bu}_h^m,\hat{p}_h^m,\hat{c}_h^m) $ be the solutions of the intermediate system. Then
\begin{equation*}
\norm{\bu^m-\hat{\bu}_h^m}_{0,\Om}+\norm{p^m-\hat{p}_h^m}_{0,\Om}+ \norm{c^m-\hat{c}_h^m}_{0,\Om} \leq C(h+\Delta t).
\end{equation*}
\end{theorem}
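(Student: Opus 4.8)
The plan is to establish the error bound $\norm{\bu^m-\hat{\bu}_h^m}_{0,\Om}+\norm{p^m-\hat{p}_h^m}_{0,\Om}+\norm{c^m-\hat{c}_h^m}_{0,\Om}\le C(h+\Delta t)$ by splitting each error into an interpolation (projection) part and a discrete part, treating the flow equations \eqref{ifu1} and the saturation equation \eqref{ifc1} first separately and then coupling them via a Gr\"onwall argument in time. First I would introduce suitable projections: the mixed (Raviart--Thomas) interpolant $\Pi_h\bu^m\in U_h$ and the $L^2$-projection $P_hp^m\in W_h$ for the flow variables, and a Ritz-type projection $R_h c^m\in M_h$ adapted to the trilinear form $A_h$ for the saturation, writing $\bu^m-\hat{\bu}_h^m=(\bu^m-\Pi_h\bu^m)+(\Pi_h\bu^m-\hat{\bu}_h^m)=:\boldsymbol{\rho}_u+\boldsymbol{\theta}_u$ and similarly $p^m-\hat p_h^m=\rho_p+\theta_p$, $c^m-\hat c_h^m=\rho_c+\theta_c$. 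The projection parts $\boldsymbol{\rho}_u,\rho_p,\rho_c$ are controlled by standard approximation theory (order $h$ in $L^2$, using $\bu\in L^\infty(J;H^1)$, $p\in L^\infty(J;H^1)$, $c\in L^\infty(J;H^2)$ guaranteed by Assumption~\ref{asmp2} and the regularity hypothesis on $M_2$), so the real work is to bound the discrete parts.

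For the flow part, at fixed time $t^m$ I would subtract \eqref{ifu1} from the mixed FVE consistency identity satisfied by $(\Pi_h\bu^m,P_hp^m)$, obtaining an error equation for $(\boldsymbol{\theta}_u,\theta_p)$. Testing with $\bv_h=\boldsymbol{\theta}_u$ and $w_h=\theta_p$ and using the coercivity/inf--sup properties of $\gamma_h$ from Lemma~\ref{prop-gamma} (in particular the fourth inequality there and the relation $b(\gamma_h\bv_h,w_h)=-(\nabla\cdot\bv_h,w_h)$), together with the Lipschitz continuity of $\alpha$ to absorb the difference $\alpha(c^m)-\alpha(\hat c_h^m)$ in terms of $\norm{\theta_c}_{0,\Om}+h$, yields $\norm{\boldsymbol{\theta}_u}_{\div;\Om}+\norm{\theta_p}_{0,\Om}\le C(h+\norm{\theta_c}_{0,\Om})$; the term $\norm{\theta_c}_{0,\Om}$ couples back into the saturation estimate. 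For the saturation part, I would subtract \eqref{ifc1} from the scheme evaluated at the projection, test with $z_h=\eta_h\theta_c$, and use: the norm equivalence $\nnorm{\cdot}_{\eta_h}\sim\norm{\cdot}_{0,\Om}$ and the identity $\norm{\eta_hz_h}_{0,\Om}=\norm{z_h}_{0,\Om}$; the coercivity of $A_h$ in the mesh-dependent norm $\nnormh{\cdot}$ (cited from \cite{kumar09}); Lipschitz bounds on $\mathcal{D},b,f$ and Assumption~\ref{asmp1}; and the boundedness of $\bu$ and $\nabla c$ from Assumption~\ref{asmp2} to handle the convective term $(b(\hat c_h^{m+1})\hat{\bu}_h^m\cdot\nabla\hat c_h^{m+1},\eta_hz_h)$ after adding and subtracting. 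The backward-Euler time-difference term $(\phi(\hat c_h^{m+1}-\hat c_h^m)/\Delta t,\eta_h\theta_c)$ generates, via a telescoping/summation-by-parts in the index $m$, a discrete $\ell^\infty(L^2)$ term plus a truncation-error contribution of size $\Delta t\,\norm{\partial_{tt}c}$, which is where the $\Delta t$ in the estimate enters.

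The main obstacle I anticipate is the handling of the nonlinear convection term and the $c$-dependence inside $\alpha,\mathcal{D},b,f$ simultaneously with the coupling between the flow and transport errors: one must be careful that the constant $C$ from repeated use of Young's inequality, the Lipschitz constants, and the $L^\infty$ bounds $M_0,M_1,M_2$ does not pick up a factor that blows up as $h\to0$ or under mesh refinement, and that the term $\norm{\theta_c}_{0,\Om}$ fed from the flow estimate back into the transport estimate can be absorbed (for $h$ and $\Delta t$ small enough) rather than producing a circular bound. The resolution is the usual one: collect all cross terms, apply Young's inequality to hide the $\nnormh{\theta_c}^2$ and $\norm{\boldsymbol{\theta}_u}_{\div;\Om}^2$ contributions on the coercive left-hand sides, and then invoke the discrete Gr\"onwall lemma over the time levels $m=1,\dots,N$ with initial error $\norm{c^0-c_{0,h}}_{0,\Om}\le Ch$ (from the Riesz projection of $c_0$). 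Summing the resulting estimates for the three components and using the triangle inequality against the projection errors $\boldsymbol{\rho}_u,\rho_p,\rho_c=O(h)$ then gives the claimed $O(h+\Delta t)$ bound. Since this argument is essentially the one in \cite{kumar12,kumar08}, I would at this point simply refer to those works for the remaining technical details. $\square$
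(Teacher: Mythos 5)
Your outline matches the paper's treatment: the paper gives no in-text proof of Theorem~\ref{thm1}, deferring exactly as you do to \cite{kumar12,kumar08}, and the argument you sketch (Raviart--Thomas/$L^2$ and Ritz-type projection splittings, error equations tested with the transfer operators, coercivity from Lemma~\ref{prop-gamma} and of $A_h$, Taylor truncation for the backward Euler term, and a discrete Gr\"onwall absorption of the flow--transport coupling) is the same machinery the paper itself deploys in detail for the costate analogue, Theorem~\ref{thm2}. So the proposal is correct and follows essentially the paper's route.
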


Likewise, one can derive a similar error bound for the intermediate costate variables.
\begin{theorem} \label{thm2}
At $ t=t^m,\, 1\leq m \leq N $ and for a given $ q^m $, let $ (\bu^{*m},p^{*m},c^{*m}) $ be the exact solutions and $ (\hat{\bu}_h^{*m},\hat{p}_h^{*m},\hat{c}_h^{*m}) $ be the solutions of the intermediate system. Then
\begin{equation*}
\norm{\bu^{*m}-\hat{\bu}_h^{*m}}_{0,\Om}+\norm{p^{*m}-\hat{p}_h^{*m}}_{0,\Om} + \norm{c^{*m}-\hat{c}_h^{*m}}_{0,\Om} \leq C(h+\Delta t).
\end{equation*}
\end{theorem}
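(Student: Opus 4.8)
The plan is to mirror the argument underpinning Theorem~\ref{thm1} for the adjoint (costate) system, treating the costate equations \eqref{ifu2}--\eqref{ifc2} as a linear system whose coefficients involve the state data $\hat{c}_h^m,\hat{\bu}_h^m$, for which Theorem~\ref{thm1} already supplies $O(h+\Delta t)$ control. Concretely, I would fix $m$, run the error decomposition backwards in time (from the terminal condition $c^{*}(T)=\hat{c}_h^{*}(T)=0$ down to $t^m$), and split each costate error into an interpolation/projection part plus a discrete part: write $\bu^{*m}-\hat{\bu}_h^{*m}=(\bu^{*m}-P_h\bu^{*m})+(P_h\bu^{*m}-\hat{\bu}_h^{*m})=:\rho^m+\theta^m$ using the mixed (Raviart--Thomas) projection, $p^{*m}-\hat{p}_h^{*m}=(p^{*m}-\Pi_h p^{*m})+(\Pi_h p^{*m}-\hat{p}_h^{*m})$ with $\Pi_h$ the $L^2$-projection onto $W_h$, and $c^{*m}-\hat{c}_h^{*m}=(c^{*m}-R_h c^{*m})+(R_h c^{*m}-\hat{c}_h^{*m})=:\zeta^m+\chi^m$ with $R_h$ the Riesz/elliptic projection adapted to the form $A_h$. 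The projection parts are bounded by $Ch$ using Assumption on the regularity of $(\bu^{*},p^{*},c^{*})$ in $M_2$ (namely $c^{*}\in L^\infty(J;H^2)$, $p^{*}\in L^\infty(J;H^1)$, etc.) together with the approximation properties of $\gamma_h,\eta_h$ recorded in the two lemmas above.

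For the discrete parts I would derive the error equations by subtracting \eqref{ifu2}--\eqref{ifc2} from the (projected) continuous adjoint system \eqref{adjc} tested against $\gamma_h\bv_h$ and $\eta_h z_h$. Choosing $\bv_h=\theta^m$, $w_h=\chi$-related test functions, and $z_h=\chi^{m+1}$ (or $\chi^m$ after re-indexing the backward difference), I would use the coercivity of $(\alpha(c_h)\cdot,\gamma_h\cdot)$ on divergence-free fields and the coercivity of $A_h(\cdot;\cdot,\cdot)$ with respect to $\nnormh{\cdot}$ to absorb the principal terms. The consistency/residual terms fall into three groups: (i) data-perturbation terms coming from replacing $\alpha(c),b(c),\mathcal{D}'(c),\bu$ by $\alpha(\hat{c}_h^m),b(\hat{c}_h^m),\mathcal{D}'(\hat{c}_h^{m+1}),\hat{\bu}_h^m$, which are handled by Lipschitz continuity of the coefficients (Assumptions~\ref{asmp1}--\ref{asmp2}) and Theorem~\ref{thm1}, giving $O(h+\Delta t)$; (ii) projection-error terms $\rho^m,\zeta^m$ and their time differences, bounded by $Ch$ via the regularity in $M_2$; and (iii) the temporal-truncation term from the backward Euler difference quotient versus $\partial_t c^{*}$, bounded by $C\Delta t$ using $\partial_t c^{*}\in L^\infty(J;H^2)$ and, if needed, a bound on $\partial_{tt}c^{*}$ (implicitly covered by the regularity hypotheses, or obtainable by the standard Taylor-with-integral-remainder estimate). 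The quadratic term $(\alpha'(\hat{c}_h^{m+1})\hat{\bu}_h^{*m}\cdot\hat{\bu}_h^m,\eta_h z_h)$ versus $(\alpha'(c)\bu^{*}\cdot\bu,\cdot)$ is linearised by writing $\bu^{*}\cdot\bu-\hat{\bu}_h^{*m}\cdot\hat{\bu}_h^m=(\bu^{*}-\hat{\bu}_h^{*m})\cdot\bu+\hat{\bu}_h^{*m}\cdot(\bu-\hat{\bu}_h^m)$ and using the $L^\infty$ bounds on $\bu,\bu^{*}$ from Assumption~\ref{asmp2} together with the (already bounded) $\|\hat{\bu}_h^{*m}\|_{0,\Om}$; similarly $c^{*}b(c)\nabla c-\hat{c}_h^{*m}b(\hat{c}_h^m)\nabla\hat{c}_h^m$ in \eqref{ifu2} is split termwise. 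After these manipulations one arrives at a discrete Gronwall inequality of the form $\nnorm{\chi^{m}}^2\le \nnorm{\chi^{m+1}}^2 + C\Delta t\nnormh{\chi^{m+1}}^2 + C\Delta t\,(h+\Delta t)^2 + \ldots$, which, summed from $m$ up to $N$ and using $\chi^N=0$, yields $\max_m\nnorm{\chi^m}\le C(h+\Delta t)$; the velocity and pressure bounds $\|\theta^m\|_{0,\Om}$, $\|\Pi_h p^{*m}-\hat{p}_h^{*m}\|_{0,\Om}$ then follow from the mixed coercivity and an inf--sup (LBB) argument for the $b(\cdot,\cdot)$ pairing, which holds for the Raviart--Thomas/piecewise-constant pair.

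The main obstacle I anticipate is the one-way coupling of the costate system to the state error: every coefficient perturbation in \eqref{ifu2}--\eqref{ifc2} is only $O(h+\Delta t)$ \emph{as a function}, but several of these perturbations multiply \emph{gradients} of costate quantities (e.g. $\mathcal{D}'(\hat{c}_h^{m+1})\nabla\hat{c}_h^{m+1}\cdot\nabla\hat{c}_h^{*(m+1)}$ and $b(\hat{c}_h^{m+1})\hat{\bu}_h^m\cdot\nabla\hat{c}_h^{*(m+1)}$), so controlling them requires either an $L^\infty$ bound on $\nabla\hat{c}_h^{m+1}$ (an inverse-estimate plus stability argument, or a discrete $W^{1,\infty}$ bound carried over from Theorem~\ref{thm1}'s proof) or a careful use of the $\nnormh{\cdot}$-coercivity to hide $\nabla\chi^{m+1}$ on the left-hand side with only a small constant, leaving a harmless $(h+\Delta t)^2$ remainder. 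Getting the constants in that absorption step to be independent of $h$ and $\Delta t$ — so that the discrete Gronwall argument closes — is the delicate point; everything else is a routine, if lengthy, adaptation of the state-equation estimates in \cite{kumar12,kumar08,chang14}.
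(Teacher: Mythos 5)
Your plan follows essentially the same route as the paper's proof: the same projection-based splittings (an auxiliary mixed/Raviart--Thomas-type projection for $(\bu^{*},p^{*})$ and a Riesz projection for $c^{*}$), coercivity through the transfer operators $\gamma_h$ and $\eta_h$, Lipschitz continuity plus Theorem~\ref{thm1} for the coefficient perturbations, a Taylor expansion for the backward-Euler truncation term, a backward discrete Gronwall argument launched from the terminal condition $c^{*}(T)=0$, and an inf--sup argument for the pressure. The only organisational nuance is that the paper first establishes that the costate velocity/pressure errors are bounded by the state and costate saturation errors (its relation \eqref{uspsrelcs}) and then feeds this into the Gronwall loop, and it resolves the gradient-perturbation terms you flag by arranging the splitting so that perturbations multiply $\nabla\tilde c_h^{*}$ (controlled by the $H^2$-regularity of $c^{*}$) or the factor $\theta^{*}-\eta_h\theta^{*}$, deferring the remaining details to \cite[Lemma 5.3]{chang14} rather than invoking a discrete $W^{1,\infty}$ bound.
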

\begin{proof}
At $ t=t^m $ let the auxiliary functions  $ (\tilde{\bu}_h^{*m},\tilde{p}_h^{*m}) $ satisfy the following equations
\begin{equation}
 \begin{split}
(\alpha(c^m)\tilde{\bu}_h^{*m},\bv_h)-(\nabla \cdot\bv_h,\tilde{p}_h^{*m}) & =-(c^{*m}b(c^m)\nabla c^m, \bv_h), \quad \forall \bv_h \in U_h,\\
(\nabla \cdot\tilde{\bu}_h^{*m},w_h) & =0,\quad \forall w_h \in W_h.
\end{split}\label{af2}
\end{equation}
Then, using the Raviart-Thomas and $ L^2-$projections (cf. \cite{brezzi91,chou98}) we can
assert that
\begin{equation}
\norm{\bu^{*m}-\tilde{\bu}_h^{*m}}_{0,\Om}+\norm{p^{*m}-\tilde{p}_h^{*m}}_{0,\Om} \leq Ch\left( \norm{\bu^{*m}}_{1,\Om}+\norm{p^{*m}}_{1,\Om} \right). \label{uptldest}
\end{equation}
Now, we split $ \bu^{*m}-\hat{\bu}_h^{*m}=(\bu^{*m}-\tilde{\bu}_h^{*m})+(\tilde{\bu}_h^{*m}-\hat{\bu}_h^{*m}) $ and $ p^{*m}-\hat{p}_h^{*m}=(p^{*m}-\tilde{p}_h^{*m})+(\tilde{p}_h^{*m}-\hat{p}_h^{*m}) $. Since the estimates of $ \bu^{*m}-\tilde{\bu}_h^{*m} $ and $ p^{*m}-\tilde{p}_h^{*m} $ are known from \eqref{uptldest}, it then suffices to estimate $ \tilde{\bu}_h^{*m}-\hat{\bu}_h^{*m} $ and $ \tilde{p}_h^{*m}-\hat{p}_h^{*m} $. Let $ \tilde{\be}_{1h}^{*m}= \tilde{\bu}_h^{*m}-\hat{\bu}_h^{*m} $ and $ \tilde{e}_{2h}^{*m}= \tilde{p}_h^{*m}-\hat{p}_h^{*m} $.
Subtracting \eqref{ifu2} from \eqref{af2} we have
\begin{align}
(&\alpha(\hat{c}_h^m)\tilde{\be}_{1h}^{*m},\gamma_h \bv_h)-(\nabla\cdot \bv_h,\tilde{e}_{2h}^{*m})= [(\alpha(c^m)\tilde{\bu}_h^{*m},\gamma_h \bv_h-\bv_h)+((\alpha(\hat{c}_h^m)-\alpha(c^m))\tilde{\bu}_h^{*m},\gamma_h \bv_h)]\nonumber\\
&+[(c^{*m}b(c^m)\nabla c^m,\gamma_h \bv_h-\bv_h)+(\hat{c}_h^{*m}b(\hat{c}_h^m)\nabla \hat{c}_h^m-c^{*m}b(c^m)\nabla c^m,\gamma_h \bv_h)], \, \forall \bv_h \in U_h, \label{sub1}
\end{align}
\begin{align}
\text{and}\quad (\nabla \cdot \tilde{\be}_{1h}^{*m},w_h) =0, \quad \forall w_h \in W_h. \label{sub2}
\end{align}
Since $ \nabla\cdot U_h \subset W_h $, we take $ w_h=\nabla\cdot\tilde{\be}_{1h}^{*m}  $ in \eqref{sub2} to obtain $ \norm{\nabla\cdot\tilde{\be}_{1h}^{*m}}_{0,\Om}=0 $,
which further implies (from the definition of $ \norm{\cdot}_{\div,\Om} $) that
\begin{equation}
\norm{\tilde{\be}_{1h}^{*m}}_{\div,\Om}=\norm{\tilde{\be}_{1h}^{*m}}_{0,\Om}. \label{h1l2eq}
\end{equation}
Choosing $ \bv_h=\tilde{\be}_{1h}^{*m} $ in \eqref{sub1} and $ w_h=\tilde{e}_{2h}^{*m} $ in \eqref{sub2}, we arrive at
\begin{align}
C\norm{\tilde{\be}_{1h}^{*m}}_{\div,\Om}^2 & \leq R_1 + R_2 : =
\bigl[(\alpha(c)\tilde{\bu}_h^{*m},\gamma_h \tilde{\be}_{1h}^{*m}-\tilde{\be}_{1h}^{*m})+((\alpha(\hat{c}_h^m)-\alpha(c^m))\tilde{\bu}_h^{*m},\gamma_h \tilde{\be}_{1h}^{*m})\bigr] \nonumber\\
&\ +\bigl[(c^{*m}b(c^m)\nabla c^m,\gamma_h \tilde{\be}_{1h}^{*m}-\tilde{\be}_{1h}^{*m})+(\hat{c}_h^{*m}b(\hat{c}_h^m)\nabla \hat{c}_h^m-c^{*m}b(c^m)\nabla c^m,\gamma_h \tilde{\be}_{1h}^{*m})\bigr]. \label{e1}
\end{align}
Using then \eqref{gmhest}, the Lipschitz continuity of $ \alpha $, and \eqref{gmhl2}, the
first term in \eqref{e1} can be bounded as
\begin{align*}
R_1 & \leq C \left(\norm{\tilde{\bu}_h^{*m}}_{0,\Om}\norm{\tilde{\be}_{1h}^{*m}-\gamma_h \tilde{\be}_{1h}^{*m}}_{0,\Om}+\norm{c^m-\hat{c}_h^m}_{0,\Om}\norm{\tilde{\bu}_h^*}_{L^\infty(\Om)^2}\norm{\gamma_h\tilde{\be}_{1h}^*}_{0,\Om}\right)\\
& \leq C \left(h\norm{\tilde{\bu}_h^*}_{0,\Om}\norm{\tilde{\be}_{1h}^*}_{\div,\Om}+\norm{c-\hat{c}_h}_{0,\Om}\norm{\tilde{\bu}_h^*}_{L^\infty(\Om)^2}\norm{\tilde{\be}_{1h}^*}_{0,\Om}\right).
\end{align*}
Regarding the second term in \eqref{e1}, we use \eqref{gmhl2} and \eqref{gmhest} to obtain
\begin{align*}
R_2 \leq &\, C(h\norm{c^{*m}}_{L^\infty(\Om)}\norm{\nabla c^m}_{L^\infty(\Om)}\norm{\tilde{\be}_{1h}^{*m}}_{\div,\Om}+\norm{c^m-\hat{c}_h^m}_{0,\Om}\norm{\nabla \hat{c}_h^{*m}}_{L^\infty(\Om)}\norm{\tilde{\be}_{1h}^{*m}}_{0,\Om}\\&+\norm{c^{*m}-\hat{c}_h^{*m}}_{0,\Om}\norm{\nabla c^m}_{L^\infty(\Om)}\norm{\tilde{\be}_{1h}^{*m}}_{0,\Om}).
\end{align*}
Substituting these bounds back in \eqref{e1}, and using \eqref{h1l2eq}, we arrive at
$$
\norm{\tilde{\be}_{1h}^{*m}}_{0,\Om}\leq C\biggl(\norm{c^m-\hat{c}_h^m}_{0,\Om}+\norm{c^{*m}-\hat{c}_h^{*m}}_{0,\Om}\biggr).$$
Next, to estimate $ \norm{\tilde{e}_{2h}^{*m}} $ we can choose
$ \bv_h=\tilde{\be}_{1h}^{*m} $ in \eqref{sub1}, leading to
$$
(\nabla\cdot \tilde{\be}_{1h}^{*m},\tilde{e}_{2h}^{*m}) \leq C\left[\norm{c^m-\hat{c}_h^m}_{0,\Om}+\norm{c^{*m}-\hat{c}_h^{*m}}_{0,\Om}+\norm{\tilde{\be}_{1h}^{*m}}_{0,\Om}\right]\norm{\tilde{\be}_{1h}^{*m}}_{0,\Om},
$$
which, after applying the inf-sup condition, gives
$$
\norm{\tilde{e}_{2h}^{*m}}_{0,\Om} \leq C\left[\norm{c^m-\hat{c}_h^m}_{0,\Om}+\norm{c^{*m}-\hat{c}_h^{*m}}_{0,\Om}+\norm{\tilde{\be}_{1h}^{*m}}_{0,\Om}\right],
$$
and so we have
\begin{equation}
\norm{\bu^{*m}-\hat{\bu}_h^{*m}}_{L^2(\Om)^2}+\norm{p^{*m}-\hat{p}_h^{*m}}_{0,\Om}\leq C[\norm{c^m-\hat{c}_h^m}_{0,\Om}+\norm{c^{*m}-\hat{c}_h^{*m}}_{0,\Om}]. \label{uspsrelcs}
\end{equation}

Now, for a fixed $t=t^n$, let $ \tilde{c}_h^{*n} $ denote the Riesz projection of $c^{*n}$. We then have that for any $ z_h \in M_h $, the following condition holds
\begin{equation}
A_h(c^n;c^{*n}-\tilde{c}_h^{*n},z_h)-((b(c^n)\bu^n-\mathcal{D}'(c^n)\nabla c^n)\cdot \nabla(c^{*n}-\tilde{c}_h^{*n}),z_h)+\lambda(c^{*n}-\tilde{c}_h^{*n},z_h)=0, \label{ogl}
\end{equation}
where $ \lambda >0$ is chosen such that, if fixing the first argument of
the trilinear form in \eqref{ogl}, the resulting bilinear form is coercive with respect to the norm $ \nnormh{\cdot} $.
We then write $ c^{*n}-\hat{c}_h^{*n}=(c^{*n}-\tilde{c}_h^{*n})+(\tilde{c}_h^{*n}-\hat{c}_h^{*n})=\rho^{*n}+\theta^{*n} $. Since the estimates for $ \rho^{*n} $ are known (see \cite{kumar12,kumar08}), it only remains to derive bounds for $ \theta^{*n} $. We proceed to multiply the costate saturation equation \eqref{adjc} by $ \eta_hz_h $, and integrating over $ \Omega $ we have (at $ t=t^{n+1} $)
\begin{equation}\label{ifcc2}
 \begin{split}
-(\phi\partial_t c^{*(n+1)}, \eta_hz_h)-((b(c^{n+1})\bu^{n+1}-\mathcal{D}'(c^{n+1})\nabla c^{n+1})\cdot \nabla c^{*(n+1)},\eta_hz_h)+A_h(c^{n+1};c^{*(n+1)},z_h) \\
+(\alpha'(c^{n+1}){\bu}^{*(n+1)}\cdot{\bu^{n+1}},\eta_hz_h)+(r_1q^{n+1}b(c^{n+1}) c^{*(n+1)},\eta_hz_h)=(wc^{n+1}, \eta_hz_h).
\end{split}
\end{equation}
Subtracting the intermediate discrete costate equation \eqref{ifc2} from \eqref{ifcc2} yields
\begin{align*}
&\qquad-(\phi\frac{\theta^{*(n+1)}-\theta^{*n}}{\Delta t}, \eta_hz_h)+A_h(c^{n+1};c^{*(n+1)},z_h)-A_h(\hat{c}_h^{n+1};\hat{c}_h^{*(n+1)},z_h)\\
&\qquad -((b(c^{n+1})\bu^{n+1}-\mathcal{D}'(c^{n+1})\nabla c^{n+1})\cdot \nabla c^{*(n+1)},\eta_hz_h)\\
&\qquad +(r_1q^{n+1} \theta^{*(n+1)},\eta_hz_h)+((b(\hat{c}_h^{n+1})\hat{\bu}_h^n-\mathcal{D}'(\hat{c}_h^{n+1})\nabla \hat{c}_h^{n+1})\cdot \nabla \hat{c}_h^{*(n+1)},\eta_hz_h)\\
&=\ (\phi\frac{\rho^{*(n+1)}-\rho^{*n}}{\Delta t}, \eta_hz_h)+\phi(\partial_t c^{*(n+1)}-\frac{c^{*(n+1)}-c^{*n}}{\Delta t}, \eta_hz_h)\\
& \qquad -(r_1q^{n+1}\rho^{*(n+1)},\eta_hz_h)-(r_1q^{n+1}c_h^{*(n+1)}(b(c^{n+1})-b(c_h^{n+1})),\eta_hz_h)\\
& \qquad +(w(c^{n+1}-\hat{c}_h^{n+1}), \eta_hz_h)+(\alpha'(\hat{c}_h^{n+1})\hat{\bu}_h^{*n}\cdot \hat{\bu}_h^n-\alpha'(c^{n+1}){\bu}^{*(n+1)}\cdot{\bu^{n+1}},\eta_hz_h).
\end{align*}
Utilising relation \eqref{ogl} and choosing $ z_h=\theta^{*(n+1)} $ in the previous
equation, we can write
\begin{equation}\label{ceq2}
 \begin{split}
&\quad -(\phi\frac{\theta^{*(n+1)}-\theta^{*n}}{\Delta t}, \eta_h\theta^{*(n+1)})+A_h(\hat{c}_h^{n+1};\theta^{*(n+1)},\theta^{*(n+1)})
-((b(c^{n+1})\bu^{n+1}\\
& \quad -\mathcal{D}'(c^{n+1})\nabla c^{n+1})\cdot \nabla \theta^{*(n+1)},\eta_h\theta^{*(n+1)})+(r_1q^{n+1} \theta^{*(n+1)},\eta_h\theta^{*(n+1)})\\
&=  (\phi\frac{\rho^{*(n+1)}-\rho^{*n}}{\Delta t}, \eta_h\theta^{*(n+1)})+\phi(\partial_t c^{*(n+1)}-\frac{c^{*(n+1)}-c^{*n}}{\Delta t}, \eta_h\theta^{*(n+1)})\\
& \quad -(r_1q^{n+1}\rho^{*(n+1)},\eta_h\theta^{*(n+1)})-(\lambda \rho^{*(n+1)},\eta_h \theta^{*(n+1)})+(w(c^{n+1}-\hat{c}_h^{n+1}), \eta_h\theta^{*(n+1)})\\
& \quad -(r_1q^{n+1}c_h^{*(n+1)}(b(c^{n+1})-b(c_h^{n+1})),\eta_h\theta^{*(n+1)})+A_h(\hat{c}_h^{n+1};\tilde{c}_h^{*(n+1)},\theta^{*(n+1)})\\
& \quad +(\alpha'(\hat{c}_h^{n+1})\hat{\bu}_h^{*n}\cdot \hat{\bu}_h^n-\alpha'(c^{n+1}){\bu}^{*(n+1)}\cdot{\bu^{n+1}},\eta_h\theta^{*(n+1)})-A_h(c^{n+1};\tilde{c}_h^{*(n+1)},\theta^{*(n+1)})\\
& \quad +((b(\hat{c}_h^{n+1})\hat{\bu}_h^n-\mathcal{D}'(\hat{c}_h^{n+1})\nabla \hat{c}_h^{n+1})\cdot \nabla \hat{c}_h^{*(n+1)},\theta^{*(n+1)}-\eta_h\theta^{*(n+1)})\\
& \quad -((b(c^{n+1})\bu^{n+1}-\mathcal{D}'(c^{n+1})\nabla c^{n+1})\cdot \nabla c^{*(n+1)},\theta^{*(n+1)}-\eta_h\theta^{*(n+1)})\\
& \quad +((b(c^{n+1})\bu^{n+1}-b(\hat{c}_h^{n+1})\hat{\bu}_h^n+\mathcal{D}'(\hat{c}_h^{n+1})\nabla \hat{c}_h^{n+1}-\mathcal{D}'(c^{n+1})\nabla c^{n+1})\cdot \nabla \tilde{c}_h^{*(n+1)},\theta^{*(n+1)}).
\end{split}
\end{equation}
Then, thanks to Cauchy-Schwarz inequality and \eqref{gml2}, we can deduce that
$$
(\phi\frac{\rho^{*(n+1)}-\rho^{*n}}{\Delta t}, \eta_h\theta^{*(n+1)})\leq C(\Delta t)^{-1/2}\norm{\partial_t \rho^*}_{L^2(t_n,t_{n+1}; L^2(\Omega))}\|\theta^{*(n+1)}\|_{0,\Om},
$$
and expanding in Taylor series it follows that
$$
(\phi\partial_t c^{*(n+1)}-\phi\frac{c^{*(n+1)}-c^{*n}}{\Delta t}, \eta_h\theta^{*(n+1)})\leq C \left(\Delta t \int_{t_n}^{t_{n+1}}\norm{\partial_{tt}c^*}_{0,\Om}^2ds \right)^{1/2}\|\theta^{*(n+1)}\|_{0,\Om}.
$$

Next, exploiting similar arguments as in the proof of \cite[Lemma 5.3]{chang14}, we can bound the terms in \eqref{ceq2} and apply Young's inequality to obtain
\begin{equation}\label{csbd}
\begin{split}
&-(\phi\frac{\theta^{*(n+1)}-\theta^{*n}}{\Delta t}, \eta_h\theta^{*(n+1)})+A_h(\hat{c}_h^{n+1};\theta^{*(n+1)},\theta^{*(n+1)})  \\
& \quad \leq C[\norm{c^{n+1}-\hat{c}_h^{n+1}}_{0,\Om}^2+\norm{\bu^{*n}-\hat{\bu}^{*n}_h}_{0,\Om}^2+\norm{\bu^n-\hat{\bu}_h^n}_{0,\Om}^2 +\Delta t\norm{\partial_{tt}c^*}_{L^2(t_n,t_{n+1}; L^2(\Omega))}^2\\
& \qquad +\Delta t \norm{\bu_t}_{L^2(t_n,t_{n+1}; L^2(\Omega)^2)}^2+\|\rho^{*(n+1)}\|_{0,\Om}^2+(\Delta t)^{-1}\norm{\partial_t \rho^*}_{L^2(t_n,t_{n+1}; L^2(\Omega))}^2 \! +\|\theta^{*(n+1)}\|_{0,\Om}^2].
\end{split} \end{equation}
On the other hand, noting that $ (\cdot,\eta_h\cdot)\geq 0 $ allows us to
write
\begin{equation}
-(\phi \frac{\theta^{*(n+1)}-\theta^{*n}}{\Delta t}, \eta_h\theta^{*(n+1)})\geq \frac{\phi}{2\Delta t}\left[ (\theta^{*n},\eta_h\theta^{*n})-(\theta^{*(n+1)},\eta_h\theta^{*(n+1)})\right]. \label{tsieq}
\end{equation}
Then, from \eqref{tsieq} together with the coercivity of $ A_h $ and the definition of
$ \nnorm{\cdot}_{\eta_h} $ in \eqref{csbd}, we can sum over $ n=m,\ldots,N-1$ to obtain
\begin{align*}
\nnorm{\theta^{*m}}_{\eta_h}^2  \leq & C \Delta t \sum_{n=m}^{N-1} [ \norm{c^{n+1}-\hat{c}_h^{n+1}}_{0,\Om}^2+\norm{\bu^{*n}-\hat{\bu}^{*n}_h}_{0,\Om}^2+\norm{\bu^n-\hat{\bu}_h^n}_{0,\Om}^2+\Delta t\norm{\partial_{tt}c^*}_{L^2(0,T; L^2(\Omega))}^2\\
& \qquad +\Delta t\norm{\bu_t}_{L^2(0,T; L^2(\Omega)^2)}^2+\|\rho^{*(n+1)}\|_{0,\Om}^2+(\Delta t)^{-1}\norm{\partial_t \rho^*}_{L^2(0,T; L^2(\Omega))}^2+\|\theta^{*(n+1)}\|_{0,\Om}^2].
\end{align*}
Finally, we combine the discrete Gronwall's lemma, the equivalence of the norms $ \nnorm{\cdot}_{\eta_h} $ and $ \norm{\cdot}_{0,\Om} $, Theorem \ref{thm1}, relation \eqref{uspsrelcs}, and the available
estimates for $ \rho^* $, to obtain the bound $\norm{\theta^{*m}}_{0,\Om}\leq C(h+\Delta t) $, which in turn implies that
\begin{equation}
\norm{c^{*m}-\hat{c}_h^{*m}}_{0,\Om}\leq C(h+\Delta t). \label{csest}
\end{equation}
Putting together \eqref{csest} with the result from Theorem \ref{thm1} in \eqref{uspsrelcs}, we
can also derive the estimate
$$\norm{\bu^{*m}-\hat{\bu}_h^{*m}}_{0,\Om}+\norm{p^{*m}-\hat{p}_h^{*m}}_{0,\Om} \leq C(h+\Delta t). $$
\end{proof}

In what follows, for a given time $t^m$ we will adopt the notation
$$(\bu^m(q_h),p^m(q_h),c^m(q_h),\bu^{*m}(q_h),p^{*m}(q_h),c^{*m}(q_h)),$$
to indicate functions satisfying the continuous optimal system for a given control $q_h$.
\begin{theorem} \label{thm3}
For a fixed $ t=t^m, \, 1\leq m \leq N $, let $ q^m $ be a local optimal control of \eqref{obj2}-\eqref{sys2} having  state and costate solutions $(\bu^m,p^m,c^m,\bu^{*m},p^{*m},c^{*m})$, and let $ (q_h^m,\bu_h^m,p_h^m,c_h^m,\bu_h^{*m},p_h^{*m},c_h^{*m})$ be its discrete counterpart. Then, there
exists $C>0$ independent of $h,\Delta t$, such that:
\begin{align*}
\norm{q^m-q_h^m}_{L^2(0,T)} \leq C(h+\Delta t),\\
\norm{\bu^m-{\bu}_h^m}_{0,\Om}+\norm{p^m-{p}_h^m}_{0,\Om}+\norm{c^m-{c}_h^m}_{0,\Om} \leq C(h+\Delta t),\\
\norm{\bu^{*m}-{\bu}_h^{*m}}_{0,\Om}+\norm{p^{*m}-{p}_h^{*m}}_{0,\Om}+\norm{c^{*m}-{c}_h^{*m}}_{0,\Om} \leq C(h+\Delta t).
\end{align*}
\end{theorem}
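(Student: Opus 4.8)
The plan is to combine the second-order sufficient condition \eqref{suf} with the two first-order optimality systems \eqref{ncd} and \eqref{sdopt}, thereby reducing the control error to the error between the exact state/costate \emph{driven by the discrete control} and the fully-discrete state/costate, and then to invoke Theorems \ref{thm1} and \ref{thm2}. Write $q$ for the reference local optimal control and $q_h=\{q_h^i\}_{i}$ for its discrete counterpart, and let $\mathcal{J}'$, $\mathcal{J}_h'$ denote the gradients of the reduced continuous and discrete cost functionals, so that \eqref{ncd} reads $\mathcal{J}'(q)(\tilde q-q)\ge 0$ and the fully-discrete form of \eqref{sdopt} reads $\mathcal{J}_h'(q_h)(\tilde q-q_h)\ge 0$, for all $\tilde q\in\Qad$. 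Since the control set is not discretised, both $q$ and $q_h$ are admissible in either inequality; taking $\tilde q=q_h$ in the first and $\tilde q=q$ in the second yields $\mathcal{J}_h'(q_h)(q_h-q)\le \mathcal{J}'(q)(q_h-q)$. By the mean value theorem, $\mathcal{J}'(q_h)(q_h-q)-\mathcal{J}'(q)(q_h-q)=\mathcal{J}''(q+\theta(q_h-q))(q_h-q,q_h-q)$ for some $\theta\in(0,1)$; provided $h+\Delta t$ is small enough that $q_h$ lies in a neighbourhood of $q$ on which coercivity survives (combining \eqref{suf} with the continuity of $q\mapsto\mathcal{J}''(q)$), this is bounded below by $\tfrac{C_0}{2}\norm{q-q_h}_{L^2(0,T)}^2$, and hence
\[
\tfrac{C_0}{2}\norm{q-q_h}_{L^2(0,T)}^2\le \bigl(\mathcal{J}'(q_h)-\mathcal{J}_h'(q_h)\bigr)(q_h-q).
\]

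Next I would bound the right-hand side. Here $\mathcal{J}'(q_h)$ involves the exact state and costate $(c(q_h),c^*(q_h),p^*(q_h))$ associated with the control $q_h$, while $\mathcal{J}_h'(q_h)$ involves the fully-discrete ones; the structural point is that the latter are exactly the solutions of the intermediate system \eqref{ifu1}--\eqref{ifc2} with $q^m$ replaced by $q_h^m$, i.e.\ they coincide with $(\bu_h^m,p_h^m,c_h^m,\bu_h^{*m},p_h^{*m},c_h^{*m})$, and $(\bu^m(q_h),\ldots,c^{*m}(q_h))$ are the corresponding exact solutions. Using the uniform bounds on $r_0,r_1,f$, the Lipschitz continuity of $f$, and the Cauchy--Schwarz inequality in space and time, one gets $\bigl(\mathcal{J}'(q_h)-\mathcal{J}_h'(q_h)\bigr)(q_h-q)\le C\,E_h\,\norm{q-q_h}_{L^2(0,T)}$, where $E_h$ denotes the discrete-in-time $L^2$ norm of $\norm{c^m(q_h)-c_h^m}_{0,\Om}+\norm{c^{*m}(q_h)-c_h^{*m}}_{0,\Om}+\norm{p^{*m}(q_h)-p_h^{*m}}_{0,\Om}$; Theorems \ref{thm1} and \ref{thm2}, applied at each time level and summed over $m$, give $E_h\le C(h+\Delta t)$. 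Dividing by $\norm{q-q_h}_{L^2(0,T)}$ proves the first estimate.

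The other two estimates follow by a triangle-inequality splitting together with the stability of the exact control-to-state and control-to-costate maps. For the state, write $c^m-c_h^m=(c^m-c^m(q_h))+(c^m(q_h)-c_h^m)$: the second term is $O(h+\Delta t)$ by Theorem \ref{thm1}, while the first is bounded by $C\norm{q-q_h}_{L^2(0,T)}$ thanks to the Lipschitz dependence of the exact saturation (and velocity, pressure) on the control in $L^2(0,T)$ --- a consequence of the a priori bounds for \eqref{sys2} under Assumptions \ref{asmp1}--\ref{asmp2}, cf.\ \cite{chang14}. The same decomposition, using Theorem \ref{thm2} and the Lipschitz dependence of the adjoint variables on the control (which in turn uses the control-to-state stability already invoked), yields the costate estimate.

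The step I expect to be the main obstacle is the transfer of the second-order sufficient condition \eqref{suf} from $q$ to the segment between $q$ and $q_h$: this requires a preliminary convergence result $q_h\to q$ in $L^2(0,T)$ as $h+\Delta t\to 0$ --- obtained, say, by extracting a weak-$*$ limit in $L^\infty(0,T)$ using the $\Qad$-bound, passing to the limit in the discrete optimality system, and appealing to the local uniqueness implied by \eqref{suf} --- combined with continuity of $q\mapsto\mathcal{J}''(q)$, so that coercivity with constant $C_0/2$ holds on a fixed neighbourhood of $q$ once $h+\Delta t$ is small. The second, more technical, point is the bookkeeping that identifies the fully-discrete optimal-control state/costate with the intermediate system of Section \ref{sec:error}, so that Theorems \ref{thm1}--\ref{thm2} apply verbatim; the rest is routine manipulation with Cauchy--Schwarz, Young's inequality, and the approximation/stability properties of $\gamma_h$ and $\eta_h$ from Section \ref{sec:fve}.
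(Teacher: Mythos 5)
Your proposal follows essentially the same route as the paper: combine the continuous and discrete variational inequalities with the second-order condition \eqref{suf} to reduce the control error to $\norm{c^{m}(q_h)-c_h^{m}}_{0,\Om}+\norm{c^{*m}(q_h)-c_h^{*m}}_{0,\Om}+\norm{p^{*m}(q_h)-p_h^{*m}}_{0,\Om}$, bound these quantities by $C(h+\Delta t)$ by rerunning the arguments of Theorems \ref{thm1}--\ref{thm2} with the control $q_h$, and then recover the state and costate estimates by a perturbation-in-the-control argument. Your additional care in transferring the coercivity \eqref{suf} from $q$ to the segment between $q$ and $q_h$ (via a preliminary convergence result and continuity of $\mathcal{J}''$) addresses a point the paper passes over silently, but it does not change the structure of the proof.
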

\begin{proof}
The continuous and discrete variational inequalities readily imply that
\begin{align}
& (f(c^{m})r_0c^{*m}-(r_0-r_1)p^{*m}+\alpha_0 q^m, q^m-q_h^m) \nonumber\\
& \qquad \qquad \leq 0 \leq (f(c_h^{m})r_0c_h^{*m}-(r_0-r_1)p_h^{*m}+\alpha_0 q_h^m, q^m-q_h^m). \label{vieqrel}
\end{align}
On the other hand, taking $\tilde{q}=q^m-q_h^m $, and using the convexity assumption \eqref{suf}, leads to
\begin{align*}
C_0 \norm{q^m-q_h^m}_{L^2(0,T)}^2 &  \leq  (J'(q^m)-J'(q_h^m),q^m-q_h^m),\\
& \leq   (f(c^{m+1})r_0c^{*m}-(r_0-r_1)p^{*m}+\alpha_0 q^m, q^m-q_h^m)\\&-(f(c^{m}(q_h))r_0c^{*m}(q_h)-(r_0-r_1)p^{*m}(q_h)+\alpha_0 q_h^m, q^m-q_h^m),
\end{align*}
and from \eqref{vieqrel}, we have
\begin{align*}
C_0 \norm{q^m-q_h^m}_{L^2(0,T)}^2 & \leq \ (f(c_h^{m})r_0c_h^{*m}-(r_0-r_1)p_h^{*m}+\alpha_0 q_h^m, q^m-q_h^m) \\
& \qquad -(f(c^{m}(q_h))r_0c^{*m}(q_h)-(r_0-r_1)p^{*m}(q_h)+\alpha_0 q_h^m, q^m-q_h^m)\\
& = \ (r_0(f(c_h^{m})c_h^{*m}-f(c^{m}(q_h))c^{*m}(q_h)), q^m-q_h^m)\\
&\qquad -((r_0-r_1)(p_h^{*m}-p^{*m}(q_h), q^m-q_h^m),
\end{align*}
which in turn yields
\begin{equation}
\norm{q^m-q_h^m}_{L^2(0,T)} \leq C\biggl(\norm{c^{m}(q_h)-c_h^{m}}_{0,\Om}+\norm{c^{*m}(q_h)-c_h^{*m}}_{0,\Om}+\norm{p^{*m}(q_h)-p_h^{*m}}_{0,\Om}\biggr). \label{qbd}
\end{equation}
From these results, and proceeding very much in the same way as done in the proofs of
Theorems \ref{thm1} and \ref{thm2}, we can assert  that
\begin{align}
\norm{c^{m}(q_h)-c_h^{m}}_{0,\Om}+\norm{\bu^{m}(q_h)-\bu_h^{m}}_{0,\Om}+\norm{p^{m}(q_h)-p_h^{m}}_{0,\Om}\leq C(h+\Delta t),\label{cqh}\\
\norm{c^{*m}(q_h)-c_h^{*m}}_{0,\Om}+\norm{\bu^{*m}(q_h)-\bu_h^{*m}}_{0,\Om}+\norm{p^{*m}(q_h)-p_h^{*m}}_{0,\Om} \leq C(h+\Delta t),\\
\norm{c^{m}-c_h^{m}}_{0,\Om}+\norm{\bu^{m}-\bu_h^{m}}_{0,\Om}+\norm{p^{m}-p_h^{m}}_{0,\Om} \leq C[(h+\Delta t)+\norm{q^m-q_h^m}_{L^2(0,T)}],\\
\norm{c^{*m}-c_h^{*m}}_{0,\Om}+\norm{\bu^{*m}-\bu_h^{*m}}_{0,\Om}+\norm{p^{*m}-p_h^{*m}}_{0,\Om} \leq C[(h+\Delta t)+\norm{q^m-q_h^m}_{L^2(0,T)}], \label{cstqh}
\end{align}
and hence the desired result follows directly from \eqref{qbd} and \eqref{cqh}-\eqref{cstqh}.
\end{proof}

Next we devote ourselves to the derivation of error estimates for the saturation in the broken $H^1-$norm. Let us start by
introducing the trilinear form $\tilde{A}_h(\cdot;\cdot,\cdot):M(h)^3 \to \mathbb{R}$ defined as
\begin{align*}
\tilde{A}_h(\psi;\phi,z)=&-\sum\limits_{K\in\cT_h}\int\limits_{K} \mathcal{D}(\psi)\nabla \phi\cdot \nabla z \ds -\sum\limits_{e\in \mathcal{E}_h}\int \limits_e \jump{z}\cdot \langle \mathcal{D}(\psi)\nabla \phi \rangle \ds
\nonumber\\&-\sum\limits_{e\in \mathcal{E}_h}\int \limits_e \jump{\phi}\cdot \langle \mathcal{D}(\psi)\nabla z \rangle \ds+\sum\limits_{e\in \mathcal{E}_h}\int \limits_e \frac{\xi}{h_e}\jump{\phi}\jump{z}\ds.
\end{align*}
If we now fix $\psi$ and
set $ \epsilon_a(\psi,\phi,\chi):=\tilde{A}_h(\psi;\phi,\chi)-A_h(\psi;\phi,\chi) \quad \forall \psi,\chi \in M_h $, then we have
the following bound (see \cite[Lemma 3.2]{bi12})
\begin{align}
\epsilon_a(\psi,\phi,\chi) \leq Ch\nnormh{\phi}\nnormh{\chi}. \label{epsrel}
\end{align}

\begin{theorem}
At $ t=t^m, \, 1\leq m \leq N $, let $ c^m $ and $ c^{*m} $ be the state and costate saturations associated to continuous problem \eqref{obj2}-\eqref{sys2} with their discrete counterparts $ c_h^m $ and $ {c}_h^{*m} $, respectively. Then, there exists $C>0$ independent of $h$ and $\Delta t$, such that:
\begin{align}
\nnormh{c^m-{c}_h^m}+\nnormh{c^{*m}-{c}_h^{*m}} \leq C(h+\Delta t). \label{sat1}
\end{align}
\end{theorem}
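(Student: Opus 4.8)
The overall idea is to upgrade the $L^2$-estimates of Theorems~\ref{thm1}--\ref{thm3} to the broken norm $\nnormh{\cdot}$ through an energy argument in which the discrete time-increment of the error serves as test function. I carry out the argument for the state saturation $c^m-c_h^m$; the costate bound $\nnormh{c^{*m}-c_h^{*m}}$ is obtained by the same scheme applied to \eqref{ifc2}/\eqref{sdac}, with the time index running backward from $N$ (so that $\theta^{*N}=0$) and with \eqref{uspsrelcs} and the just-obtained state estimates substituted wherever the costate equation couples back to the state.

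I begin by splitting the error with the elliptic (Riesz) projection $\tilde c_h^n\in M_h$ of $c^n$ --- defined through the shifted coercive form attached to $A_h(c^n;\cdot,\cdot)$ as in \eqref{ogl} --- writing $c^n-c_h^n=\rho^n+\theta^n$ with $\rho^n:=c^n-\tilde c_h^n$ and $\theta^n:=\tilde c_h^n-c_h^n\in M_h$. The projection errors are known to satisfy $\nnormh{\rho^n}\le Ch$, $\|\rho^n\|_{0,\Om}\le Ch$ and $\Delta t\sum_n\|\partial_t\rho^n\|_{0,\Om}^2\le Ch^2$ (cf.\ \cite{kumar12,kumar08}), and since $c_h^0=c_{0,h}$ is itself the Riesz projection of $c_0$ one has $\theta^0=0$; thus it suffices to bound $\nnormh{\theta^m}$. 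Subtracting the fully-discrete saturation scheme from the continuous equation tested against $\eta_hz_h$, and using the defining identity of the projection, one arrives at an error equation
\[
\Bigl(\phi\tfrac{\theta^{n+1}-\theta^n}{\Delta t},\eta_hz_h\Bigr)+A_h(c_h^{n+1};\theta^{n+1},z_h)=G^{n+1}(z_h),\qquad z_h\in M_h,
\]
where $G^{n+1}$ gathers the backward-Euler truncation remainders (controlled by $\Delta t$ in $L^2(t_n,t_{n+1};L^2(\Om))$-type norms through Taylor expansion, as in the proof of Theorem~\ref{thm2}), the projection residuals in $\rho^{n+1}$ and $\partial_t\rho^{n+1}$, the consistency defect $\epsilon_a$ of \eqref{epsrel}, and the nonlinear coefficient-difference terms which --- using the Lipschitz assumptions (Assumptions~\ref{asmp1}--\ref{asmp2} and the $M_2$-bound) together with the $L^2$-estimates of Theorems~\ref{thm1}--\ref{thm3} for $c^n-c_h^n$, $\bu^n-\bu_h^n$, $\bu^{*n}-\bu_h^{*n}$ --- are $O(h+\Delta t)$ in $L^2(\Om)$, up to a controllable multiple of $\nnormh{\theta^{n+1}}$.

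The crucial step is to choose $z_h=\theta^{n+1}-\theta^n$: the time term then contributes $\tfrac1{\Delta t}(\phi(\theta^{n+1}-\theta^n),\eta_h(\theta^{n+1}-\theta^n))\ge 0$ (as in \eqref{tsieq}), and, writing $A_h=\tilde A_h-\epsilon_a$ and exploiting the symmetry and coercivity of $\tilde A_h$, the principal term satisfies $\tilde A_h(c_h^{n+1};\theta^{n+1},\theta^{n+1}-\theta^n)\ge\tfrac12[\tilde A_h(c_h^{n+1};\theta^{n+1},\theta^{n+1})-\tilde A_h(c_h^{n+1};\theta^n,\theta^n)]$; swapping the argument $c_h^{n+1}$ for $c_h^n$ in the subtracted term costs $\lesssim\|c_h^{n+1}-c_h^n\|_{0,\Om}\nnormh{\theta^n}^2\lesssim\Delta t\,\nnormh{\theta^n}^2$ by Lipschitz continuity of $\mathcal D$ and the discrete parabolic bound on the time-increments. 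Summing over $n=0,\dots,m-1$ telescopes the main bracket to $\tilde A_h(c_h^m;\theta^m,\theta^m)\ge c_A\nnormh{\theta^m}^2$ (recall $\theta^0=0$), and combining Cauchy--Schwarz, Young's inequality, \eqref{epsrel}, the triangle inequality $\nnormh{\theta^{n+1}}\le\nnormh{\theta^{n+1}-\theta^n}+\nnormh{\theta^n}$, the $L^2$-estimates above, and $\sum_n\Delta t=T$, one is led to
\[
c_A\nnormh{\theta^m}^2\le C(h+\Delta t)^2+C\Delta t\sum_{n=0}^{m}\nnormh{\theta^n}^2 .
\]
The discrete Gronwall lemma then yields $\nnormh{\theta^m}\le C(h+\Delta t)$, and with $\nnormh{\rho^m}\le Ch$ this proves the state half of \eqref{sat1}; the costate half is identical.

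The main obstacle is exactly the pointwise-in-time control, which is genuinely stronger than what the $\theta^{n+1}$-test used for the $L^2$-estimates gives (that choice only delivers a time-averaged $\nnormh{\cdot}$ bound). Making the discrete-time-increment test work forces one to handle, without loss of order, the two ``variable-coefficient'' contributions to the telescope: the non-symmetry of $A_h$, which obliges passage to $\tilde A_h$ and so produces the $O(h)$ defect $\epsilon_a$ that must be reabsorbed into $c_A\nnormh{\theta^m}^2$ and into the Gronwall term --- this is where one either invokes the standard coupling $\Delta t\sim h$ or derives an auxiliary higher-order bound on $\|\theta^{n+1}-\theta^n\|$ from the error equation; and the dependence of $\tilde A_h(c_h^{n+1};\cdot,\cdot)$ on the drifting first argument, which defeats an exact telescope and must be compensated by the $\Delta t\,\nnormh{\theta^n}^2$ term absorbed by Gronwall. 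Beyond this bookkeeping, the remaining estimates are a repetition of those already carried out for Theorems~\ref{thm1}--\ref{thm3}.
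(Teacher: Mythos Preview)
Your proposal is correct and follows essentially the same route as the paper: Riesz-projection splitting $c^n-c_h^n=\rho^n+\theta^n$, an error equation, the choice of the discrete time-increment as test function (the paper writes it as $\partial_t\theta^{n+1}$, i.e.\ your $\theta^{n+1}-\theta^n$ divided by $\Delta t$), passage from $A_h$ to the symmetric form $\tilde A_h$ via \eqref{epsrel}, the telescoping inequality for $\tilde A_h$, summation using $\theta^0=0$, and discrete Gronwall. Two remarks: the paper handles the $\epsilon_a$ defect by the inverse inequality $h\nnormh{\partial_t\theta^{n+1}}\le C\|\partial_t\theta^{n+1}\|_{0,\Om}$ and then absorbs this into the positive time term $\phi\nnorm{\partial_t\theta^{n+1}}_{\eta_h}^2$ on the left, so no $\Delta t\sim h$ coupling is actually needed; and the variable-coefficient drift in the telescope (your swap $c_h^{n+1}\leftrightarrow c_h^n$) is glossed over in the paper, so your explicit treatment of it is an improvement rather than a deviation.
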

\begin{proof}
Let $ \tilde{c}_h^n $ be the Riesz projection of $c^{n}$ at time $ t=t^n $ such that
\begin{equation}
A_h(c^n;c^{n}-\tilde{c}_h^{n},z_h)+(b(c^n)\bu^n\cdot \nabla(c^{n}-\tilde{c}_h^{n}),z_h)+\lambda(c^{n}-\tilde{c}_h^{n},z_h)=0, \quad \forall z_h \in M_h, \label{RP1}
\end{equation}
where $ \lambda >0 $ is chosen to guarantee the coercivity of bilinear form defined by \eqref{RP1} with respect to the norm $ \nnormh{\cdot} $.
We then proceed similarly as in  \cite[Lemma 4.2]{kumar12} and
split $ c^n-{c}_h^n=(c^n-\tilde{c}_h^n)+(\tilde{c}_h^n-c_h^n)=\rho^n+\theta^n $, which implies that
\begin{align}
\nnormh{c^n-{c}_h^n}\leq\nnormh{\rho^n}+\nnormh{\theta^n} \leq Ch + \nnormh{\theta^n}.
\end{align}
Testing the state saturation equation in \eqref{sys2} against $ \eta_h z_h $ and integrating over $ \Omega $, we obtain, at $ t=t^{n+1} $
\begin{align}
& (\phi\partial_t c^{n+1}, \eta_hz_h)+A_h(c^{n+1};c^{n+1},z_h)+(b(c^{n+1})\bu^{n+1}\cdot \nabla c^{n+1},\eta_hz_h)\nonumber\\
& \qquad \qquad =(f(c^{n+1})r_0q^{n+1},\eta_hz_h) \label{ifcc3}
\end{align}
Subtracting the discrete state saturation equation  from \eqref{ifcc3}, we then obtain
\begin{align*}
(\phi\partial_t\theta^{n+1}, \eta_hz_h)+A_h(c^{n+1};c^{n+1},z_h)-A_h({c}_h^{n+1};{c}_h^{n+1},z_h)
+(b(c^{n+1})\bu^{n+1}\cdot \nabla c^{n+1},\eta_hz_h)\\-(b({c}_h^{n+1}){\bu}_h^n\cdot \nabla {c}_h^{n+1},\eta_hz_h)= -(\phi\frac{\rho^{n+1}-\rho^{n}}{\Delta t}, \eta_hz_h)-\phi(\partial_t c^{n+1}-\frac{c^{n+1}-c^{n}}{\Delta t}, \eta_hz_h)\\
+(f(c^{n+1})r_0q^{n+1}-f({c}_h^{n+1})r_0q_h^{n+1}, \eta_hz_h)
\end{align*}
Using the definition of $ \epsilon_a $ together with relation \eqref{RP1}, and choosing $ z_h=\partial_t \theta^{n+1} $, we arrive at
\begin{equation}
\begin{split}
& \phi\nnorm{\partial_t\theta^{n+1}}_{\eta_h}^2+A(c_h^{n+1};\theta^{n+1},\partial_t\theta^{n+1})\\
& \quad = -(\phi\frac{\rho^{n+1}-\rho^{n}}{\Delta t}, \eta_h \partial_t\theta^{n+1})-\phi(\partial_t c^{n+1}-\frac{c^{n+1}-c^{n}}{\Delta t}, \eta_h \partial_t\theta^{n+1})\\
&\qquad+(f(c^{n+1})r_0q^{n+1}-f({c}_h^{n+1})r_0q_h^{n+1}, \eta_h \partial_t\theta^{n+1})+(\lambda \rho^{n+1},\eta_h \partial_t\theta^{n+1})\\
&\qquad +[A_h({c}_h^{n+1};\tilde{c}_h^{n+1},\partial_t\theta^{n+1})-A_h(c^{n+1};\tilde{c}_h^{n+1},\partial_t\theta^{n+1})]\\
&\qquad  -(b({c}_h^{n+1}){\bu}_h^n\cdot \nabla {c}_h^{n+1},\partial_t\theta^{n+1}-\eta_h\partial_t\theta^{n+1}) +(b(c^{n+1})\bu^{n+1}\cdot \nabla c^{n+1},\partial_t\theta^{n+1}-\eta_h \partial_t\theta^{n+1})\\
&\qquad -((b(c^{n+1})\bu^{n+1}-b({c}_h^{n+1}){\bu}_h^n)\cdot \nabla \tilde{c}_h^{n+1},\partial_t\theta^{n+1})\\
&\qquad -(b(c^{n+1})\bu^{n+1}\cdot \nabla\theta^{n+1},\eta_h \partial_t\theta^{n+1})
+\epsilon_a(c_h^{n+1};\theta^{n+1},\partial_t\theta^{n+1}). \label{ceq3}
\end{split}
\end{equation}
We can then apply \eqref{epsrel} and the inverse inequality  to obtain
\begin{align}
\epsilon_a(c_h^{n+1};\theta^{n+1},\partial_t \theta^{n+1}) \leq Ch\nnormh{\theta^{n+1}}\nnormh{\partial_t \theta^{n+1}} \leq C\nnormh{\theta^{n+1}}\norm{\partial_t \theta^{n+1}}_{0,\Omega}. \label{epsineq2}
\end{align}
Proceeding similarly as in the proof of Theorem \ref{thm2}, and using \eqref{epsineq2}, we deduce that
the terms in \eqref{ceq3} can be bounded as follows
\begin{equation}
\begin{split}
& \phi\nnorm{\partial_t\theta^{n+1}}_{\eta_h}^2+A(c_h^{n+1};\theta^{n+1},\partial_t\theta^{n+1}) \\
& \quad \leq  C[\norm{c^{n+1}-c_h^{n+1}}_{0,\Omega}^2+\norm{\bu^n-\bu_h^n}_{0,\Omega}^2+\norm{q^{n+1}-q_h^{n+1}}_{L^2(0,T)}+\Delta t\norm{\partial_{tt}c}_{L^2(t_n,t_{n+1}; L^2(\Omega))}^2\\
&\qquad +\Delta t \norm{\bu_t}_{L^2(t_n,t_{n+1}; L^2(\Omega)^2)}^2+\|\rho^{n+1}\|_{0,\Om}^2
+(\Delta t)^{-1}\norm{\partial_t \rho}_{L^2(t_n,t_{n+1}; L^2(\Omega))}^2 \\
&\qquad +\nnormh{\theta^{n+1}}^2  +\|\partial_t\theta^{n+1}\|_{0,\Om}^2],
\end{split} \label{csbd1}
\end{equation}
and therefore it can be seen that
\begin{align}
\tilde{A}_h(c_h^{n+1};\theta^{n+1},\partial_t\theta^{n+1}) \geq \frac{1}{2\Delta t}\left[\tilde{A}_h(c_h^{n+1};\theta^{n+1},\theta^{n+1})-\tilde{A}_h(c_h^{n+1};\theta^{n},\theta^{n}) \right]. \label{tsieq1}
\end{align}
Summing over $ n=0,\ldots,m-1 $, using the equivalence between the norms $ \norm{\cdot}_{\eta_h} $ and $ \norm{\cdot}_{0,\Omega} $, the coercivity of the bilinear form $ \tilde{A}_h(c_h^{n+1},\cdot,\cdot)$ and noting that $ \theta^0=0 $ in \eqref{csbd1};
we get that
\begin{align*}
\nnormh{\theta^{m}}^2 \leq & C \Delta t \sum\limits_{n=0}^{m-1}[\norm{c^{n+1}-c_h^{n+1}}_{0,\Omega}^2+\norm{\bu^n-\bu_h^n}_{0,\Omega}^2+\norm{q^{n+1}-q_h^{n+1}}_{L^2(0,T)}\\
& \quad +\Delta t\norm{\partial_{tt}c}_{L^2(t_n,t_{n+1}; L^2(\Omega))}^2
+\Delta t \norm{\bu_t}_{L^2(t_n,t_{n+1}; L^2(\Omega)^2)}^2+\|\rho^{n+1}\|_{0,\Om}^2\\
&\quad +(\Delta t)^{-1}\norm{\partial_t \rho}_{L^2(t_n,t_{n+1}; L^2(\Omega))}^2 \!+\nnormh{\theta^{n+1}}^2],
\end{align*}
 for an appropriate value of  the constant $ C $.
Applying the discrete Gronwall's lemma and the estimates in Theorem \ref{thm3}, leads to the bound
 $ \nnormh{\theta^{m}} \leq C(h+\Delta t) $, which together with \eqref{sat1}, implies that
\begin{align*}
\nnormh{c^{m}-c_h^m} \leq C(h+\Delta t).
\end{align*}
The bound for  $ \nnormh{c^{*m}-c_h^{*m}}$ can be derived using the same approach.
\end{proof}

\section{Implementation of the optimal control solver}\label{sec:impl}
Now we proceed to describe the implementation of the numerical methods discussed in Section \ref{sec:fve}.
For the specific applications in the present context, it is known that the pressure field exhibits much smoother
profiles in time, compared to the evolution of saturation.
We will therefore consider a first partition of $J$ as $ 0=t_0<t_1<\cdots < t_M=T $
with step length $ \Delta t_m=t_{m+1}-t_m $ dedicated for the Darcy equations, whereas for the saturation
equation we take $ 0=t^0<t^1<\cdots< t_N=T $ with timestep $ \Delta t^n=t^{n+1}-t^n $. We remark that
such a splitting will still produce accurate approximations (see the discussion in e.g. \cite{ewing82}).

\paragraph{A splitting method for both state and costate problems.}
To lighten the notation we will adopt the following notation
\begin{align*}
& C^n = c_h(t^n), \quad C_m = c_h(t_m), \quad C^{*n} = c_h^*(t^n), \quad
C_m^* = c_h^*(t_m), \\
& \bU_m = \bu_h(t_m), \quad P_m = p_h(t_m), \quad
\bU_m^* =  \bu_h^*(t_m), \quad P_m^* =  p_h^*(t_m).
\end{align*}
In addition, if $ t_{m-1}<t^n\leq t_m $, then velocity approximation at $ t=t^n $ is defined by
\begin{align*}
\bU^n &=\left( 1+ \frac{t^n-t_{m-1}}{\Delta t_{m-2}} \right)\bU_{m-1}- \frac{t^n-t_{m-1}}{\Delta t_{m-2}} \bU_{m-2},  \text{ for } m=2,\ldots,M,\quad
\bU^n =\bU_0, \quad \text{for}\,\, m=1,\\
\bU^{*n} &=\left( 1+ \frac{t^n-t_{m-1}}{\Delta t_{m-2}} \right)\bU^*_{m}- \frac{t^n-t_{m-1}}{\Delta t_{m-2}} \bU^*_{m-1}, \text{ for } m=M-1,\ldots,1,
\quad \bU^{n*} =\bU_M, \text{ for } m=M.
\end{align*}
We then rewrite the discrete  state Darcy equations
\eqref{sdvp}-\eqref{sdp} is to find $(\bU,P):\lbrace t_0,\ldots,t_M \rbrace \rightarrow U_h \times W_h $ such that
\begin{equation}
\begin{split}
(\alpha(C_m)\bU_m,\gamma_h \bv_h)-( \nabla\cdot \bv_h,P_m) &=0 \quad \forall \bv_h \in U_h,\\
(\nabla \cdot \bU_m,w_h)  - ((r_0-r_1)q_h^m,w_h) &= 0 \quad \forall w_h \in W_h.
\end{split} \label{dpveq1}
\end{equation}
On the other hand, assuming a backward difference approximation of the first order time derivative,
the discrete state saturation equation \eqref{sdc} reduces to find
$ C:\lbrace t^0,\ldots,t^N \rbrace \rightarrow M_h $ such that
\begin{equation}
(\phi\frac{C^{n+1}-C^n}{\Delta t^n}, \eta_hz_h)+A_h(C^{n+1};C^{n+1},z_h)+(b(C^{n+1})\bU^{n+1}\cdot\nabla C^{n+1},\eta_hz_h)=(f(C^{n+1})r_0 q_h^{i+1}, \eta_hz_h). \label{dsateq1}
\end{equation}

Next, for a given control $ q_h^0 $, we take $ C^0=C_0=c_{0,h} $ and obtain velocity and pressure
approximations $ (\bU_0,P_0) $ from \eqref{dpveq1}.  Using $ \bU_0 $ we can compute $ C^1 $ from
\eqref{dsateq1}, and repeat the process throughout the time horizon.
Then the discrete costate Darcy problem \eqref{sdavp}-\eqref{sdap} consists in finding
$ (\bU^*,P^*):\lbrace t_M, \ldots, t_0 \rbrace \rightarrow U_h \times W_h $ such that
\begin{equation}
 \begin{split}
(\alpha(C_m)\bU^*_m,\gamma_h \bv_h)-(\nabla\cdot \bv_h,P^{*}_m) & =-(C^{*}_m b(C_m)\nabla C_m,\gamma_h \bv_h), \\
(\nabla \cdot\bU^{*m},w_h) & =0.
\end{split} \label{dpveq2}
\end{equation}
The discrete costate saturation equation \eqref{sdac} reads: Find $ C^*:\lbrace t^N, \ldots, t^0 \rbrace \rightarrow M_h $ such that
\begin{equation}
 \begin{aligned}
-(\phi\frac{C^{*(n+1)}-C^{*n}}{\Delta t^n}, \eta_hz_h)+A_h(C^{n+1};C^{*(n+1)},z_h)-(b(C^{n+1})\bU^{n+1}\cdot\nabla C^{*(n+1)}, \eta_hz_h) \qquad & \\+(\mathcal{D}'(C^{n+1})\nabla C^{n+1}\cdot \nabla C^{*(n+1)}, \eta_hz_h)+(\alpha'(C^{n+1})\bU^{*(n+1)}\cdot \bU^n,\eta_hz_h) \qquad & \\+(r_1q_h^{n+1}b(C^{n+1})C^{*(n+1)},\eta_hz_h) =(wC^{n+1}, \eta_hz_h). &
\end{aligned}
\label{dsateq2}
\end{equation}
Using $ C^{*N}=C_{*N}=0 $ we find $ (\bU^*_N,P^*_N) $ from \eqref{dpveq2} and using $ \bU^*_N $ we obtain
$ C^{N-1} $ from \eqref{dsateq2}. The process is then repeated down to $t=0$.

\paragraph{Discrete problems in matrix form.}
Let $ \lbrace \bPhi_i \rbrace_{j=1}^{N_m} $ be
basis functions for the trial space $ U_h $ and $ \lbrace \chi_l^* \rbrace_{l=1}^{N_e}  $
denote characteristic functions for each element in $\cT_h$, which form basis functions for $ W_h $.
We denote by $ N_m $ the number of of midpoints of the edges  in $ \cT_h $, and $ N_e $ stands for
the total number of elements. The vectors containing the unknowns for each variable are then
constructed as
$$ \bU_m=\sum_{j=1}^{N_m} \alpha_j^m \bPhi_j,\quad P_m=\sum\limits_{l=1}^{N_e} \beta_l^m \chi_l^*,\quad \bU_m^*=\sum\limits_{j=1}^{N_m} \alpha_j^{*m} \bPhi_j, \quad P_m^*=\sum\limits_{l=1}^{N_e} \beta_l^{*m} \chi_l^*  ,$$
where the coefficients are specified as
$$ \alpha_j=(\bu_h \cdot \bn_j)(M_j),\quad \beta_l=p_h(b_{Kl}), \quad\alpha_j^*=(\bu_h^* \cdot \bn_j)(M_j),\quad \beta_l^*=p_h^*(b_{Kl}) ,$$
with $ b_{Kl} $  denoting the barycentre of the triangle $ K_l $.
After defining the following matrix and vector entries (with indexes $1\leq l \leq N_e, 1\leq i,j\leq N_m$)
\begin{align*}
(A_m)_{ij}:=\int_{T^*_{M_i}} \alpha(C_m)\bPhi_j\cdot \bPhi_i(M_i)\dx,\quad
(B_m)_{lj}:=\int_{T_l} \nabla \cdot \bPhi_j \dx, \\
(F_m)_l :=\int_{T_l} (r_0-r_1)q_h^m \dx, \quad
(F_m^*)_i:=-\int_{T^*_{M_i}} C^{*}_m b(C_m)\nabla C_m \cdot \bPhi_i(M_i)\dx,
\end{align*}
we can write the matrix form of the discrete state Darcy equations \eqref{dpveq1} as
\begin{equation}
\begin{pmatrix}
\bA_m & \bB_m \\
\bB_m^T &  \boldsymbol{0}
\end{pmatrix}
\begin{pmatrix}
\balpha^m\\
\bbeta^m
\end{pmatrix}=\begin{pmatrix}
\boldsymbol{0} \\
\bF_m
\end{pmatrix}, \label{matPV1}
\end{equation}
and the discrete costate Darcy problem \eqref{dpveq2} in matrix form as
\begin{equation}
\begin{pmatrix}
\bA_m & \bB_m \\
\bB_m^T &  \boldsymbol{0}
\end{pmatrix}
\begin{pmatrix}
\balpha^{*m}\\
\bbeta^{*m}
\end{pmatrix}=\begin{pmatrix}
\bF_m^*\\
\boldsymbol{0}
\end{pmatrix}. \label{matPV2}
\end{equation}

Regarding the transport equation, let $ \lbrace \Psi_i \rbrace_{i=1}^{N_h} $ denote a basis for $ M_h $, so that
the vectors of state and costate saturations are respectively  $ C^n=\sum_{i=1}^{N_h} \delta_i^n \Psi_i $ and $ C^{*n}=\sum_{i=1}^{N_h} \delta_i^{*n} \Psi_i $. We use the notation $\bdelta^n =(C^n(P_i))_{i=1}^{N_h}$ and $\bdelta^{*n} =(C^{*n}(P_i))_{i=1}^{N_h}$, and
define the following matrix and vector entries (with $1\leq i,j \leq N_h$)
\begin{gather*}
(D^n)_{ij}:=\int_{K_i^*} \! \Psi_i \eta_h\Psi_j \dx,\,
(E_n)_{ij}:=\int_{K_i^*}\!\!(b(C^n)\bU^n \cdot \nabla \Psi_i)\eta_h\Psi_j \dx, \,
(G_n)_i := \int_{K_i^*} f(C^n)r_0 q_h^n \eta_h\Psi_i \dx,\\
(R_n)_{ij}:=\int_{K_i^*} r_1 q_h^n b(C^n)\Psi_i\eta_h\Psi_j \dx, \,
(S^n)_{ij}:=\int_{K_i^*} \mathcal{D}'(C^n)\nabla C^n \cdot \nabla\Psi_i \eta_h\Psi_j \dx,\\
(W_n)_i:=\int_{K_i^*} w C^n \eta_h\Psi_i,\,
(Z_n)_i:=\int_{K_i^*} \alpha'(C^n) \bU^n\cdot \bU^{*n}\eta_h\Psi_i \dx, \quad H_n: =T_1^n+T_2^n+T_3^n+T_4^n, \\
(T_1^n)_{ij}=-\sum_{ K\in\cT_h}\sum_{k=1}^3\int\limits_{v_{k+1}b_Kv_k} \!\!\!\!\mathcal{D}(C^n)\nabla \Psi_i\cdot \bn \eta_h\Psi_j \ds,\
(T_2^n)_{ij}= -\sum_{e\in \mathcal{E}_h}\int_e \jump{\eta_h\Psi_i}\cdot \langle \mathcal{D}(C^n)\nabla \Psi_j \rangle \ds,\\
(T_3^n)_{ij}=-\sum_{e\in \mathcal{E}_h}\int_e \jump{\eta_h\Psi_j}\cdot \langle \mathcal{D}(C^n)\nabla \Psi_i \rangle \ds,\quad
(T_4^n)_{ij}= \sum_{e\in \mathcal{E}_h}\int_e \frac{\xi}{h_e}\jump{\Psi_i}\jump{\Psi_j}\ds.
\end{gather*}
where $v_k$ denotes a vertex of $K$.

Therefore the state saturation equation \eqref{dsateq1} adopts the following matrix form
\begin{align}
\left[\phi \bD^n+\Delta t_n(\bE_n+\bH_n) \right] \bdelta^{n+1}=\phi \bD^n \bdelta^n+ \Delta t_n \bG^n, \label{matS1}
\end{align}
and likewise, the matrix form of the costate saturation equation \eqref{dsateq2} reads
\begin{align}
-\phi \bD^n \bdelta^{*n}=\left[-\phi \bD^n+\Delta t_n(-\bE_n+\bH_n+\bS_n+\bR_n) \right] \bdelta^{*(n+1)}-\Delta t_n (-\bZ_n+\bW_n). \label{matS2}
\end{align}



\paragraph{Active set strategy.}
The control constraints can be implemented following the active set strategy adapted from \cite{hinze05,kumar16},
where the main steps of the method are be summarised in Algorithm \ref{algo1}, below.

\noindent We first notice that the discrete variational inequality
\begin{align*}
(f(C^n)r_0 C^{*n}-(r_0-r_1)P^{*n}+\alpha_0 q_h^n, \tilde{q_h}-q_h^n) \geq 0, \quad \forall \tilde{q_h} \in \Qad,\,\,n=0,\ldots,N,
\end{align*}
is equivalently written as
\begin{align*}
q_h^n :=\max \lbrace 0, \min \lbrace \tilde{q}, -\alpha_0^{-1}\int_{\Omega} f(C^n)r_0 C^{*n}-(r_0-r_1)P^{*n} \dx \rbrace \rbrace,\,\,n=0,\ldots,N,
\end{align*}
(see e.g. \cite{chang14}), and
we observe that the quantity $ -\alpha_0^{-1}\int_{\Omega} f(C^n)r_0 C^{*n}-(r_0-r_1)P^{*n} \dx  $ can be considered as a measure for the activity of control constraints.
For each time horizon, we proceed to define the active sets $ A_{k+1}^{-,n} $  and $ A_{k+1}^{+,n} $ as well as inactive set $ I_{k+1}^n $, at the current iteration, as follows
\begin{align*}
 A_{k+1}^{-,n} &:=\left\lbrace x\in \Om: -\alpha_0^{-1}\int_{\Omega} f(C^n_k)r_0 C^{*n}_k-(r_0-r_1)P^{*n}_k \dx < 0 \right\rbrace,\, n=0,\ldots,N, \\
 A_{k+1}^{+,n} &:=\left\lbrace x\in \Om: -\alpha_0^{-1}\int_{\Omega} f(C^n_k)r_0 C^{*n}_k-(r_0-r_1)P^{*n}_k \dx > \tilde{q} \right\rbrace,\, n=0,\ldots,N,\\
 I_{k+1}^n &:=\Om \setminus ( A_{k+1}^{-,n} \cup  A_{k+1}^{+,n}),
\end{align*}
then we have that
\begin{align*}
q_{h,k+1}^n=\begin{cases}
    0 & \text{on}\quad A_{k+1}^{-,n} ,\\
    -\alpha_0^{-1}\int_{\Omega} f(C^n_k)r_0 C^{*n}_k-(r_0-r_1)P^{*n}_k  & \text{on}\quad  I_{k+1}^n,\\
  \tilde{q}  &  \text{on} \quad A_{k+1}^{+,n},
\end{cases}
\end{align*}
or, equivalently,
\begin{align}
q_{h,k+1}^n=\tilde{q}\chi_{A_{k+1}^{+,n}}-\alpha_0^{-1}\int_{\Omega} f(C^n_k)r_0 C^{*n}_k-(r_0-r_1)P^{*n}_k(1-\chi_{A_{k+1}^{-,n}}-\chi_{A_{k+1}^{+,n}}), \label{qatrel}
\end{align}
where $ \chi_{A_{k+1}^{-,n}} $ and $ \chi_{A_{k+1}^{+,n}} $ are the characteristic functions corresponding to the active sets $ A_{k+1}^{-,n} $ and $ A_{k+1}^{+,n} $, respectively.
Using the value of $ C, C^{*} $ and $ P^{*} $, we can compute the discrete control $ q_h $ for each time horizon. We can then repeat the process until we reach the termination criteria, that is, when two successive active sets coincide.

\begin{algorithm}[H]
\algsetup{indent=2em}
\caption{Method of active sets}
\begin{algorithmic}[1] \label{algo1}
\STATE Choose \AND store arbitrary initial guess $ q_{h,0} $ \AND set $ k=0$
\FOR {$k=0,1,\ldots,$}
\STATE Given the control $ q_{h,k} $, \textbf{compute} $ (\bU_k,P_k):\lbrace t_0,t_1,\ldots,t_M \rbrace \rightarrow U_h \times W_h $ from \eqref{matPV1}
\STATE \textbf{compute} $ C_k:\lbrace t^0,t^1,\ldots,t^N \rbrace \rightarrow M_h $ from \eqref{matS1}
\STATE \textbf{compute} $ (\bU^{*}_k,P^{*}_k):\lbrace t_M, t_{M-1},\ldots,t_0 \rbrace \rightarrow U_h \times W_h $ from \eqref{matPV2}
\STATE \textbf{compute} $ C^{*}_k: \lbrace t^N, t^{N-1},\ldots,t^0 \rbrace \rightarrow M_h $ from \eqref{matS2}
\STATE Update $ q_{h,k} \leftarrow q_{h,k+1}$ from relation \eqref{qatrel}
 \IF{ $ A_{k+1}^{-}=A_{k}^{-} $ \AND $ A_{k+1}^{+}=A_{k}^{+} $}
 \STATE \textbf{stop}
 \ELSE
 \STATE \textbf{go to} step 3
 \ENDIF
\ENDFOR
\end{algorithmic}
\end{algorithm}

\end{document}